\title{\bf Sharp Li--Yau Inequalities for Dunkl Harmonic Oscillators}
\author{Huaiqian Li\footnote{Email: {\color{blue}huaiqianlee@gmail.com}}
\quad Bin Qian\footnote{Email: {\color{blue} binqiancn@126.com   }}
 \vspace{2mm}
\\
{\footnotesize Center for Applied Mathematics, Tianjin University, Tianjin 300072, P.R. China} \\
{\footnotesize Department of Mathematics and Statistics, Changshu Institute of Technology,} \\
{\footnotesize Changshu, Jiangsu 215500, P.R. China.}
}
\date{}
\def\R{\mathbb{R}}
\def\D{\mathbb{D}}
\def\d{\textup{d}}
\def\D{\textup{D}}
\def\e{\textup{e}}
\def\<{\langle}
\def\>{\rangle}
\def\Proof.{\noindent{\bf Proof. }}
\def\newdot{{\kern.8pt\cdot\kern.8pt}}
\newtheorem{theorem}{Theorem}[section]
\newtheorem{lemma}[theorem]{Lemma}
\newtheorem{corollary}[theorem]{Corollary}
\theoremstyle{definition}\newtheorem{remark}[theorem]{Remark}
\begin{document}
\allowdisplaybreaks
\maketitle
\makeatletter 
\renewcommand\theequation{\thesection.\arabic{equation}}
\@addtoreset{equation}{section}
\makeatother 

\begin{abstract}
We study the Li--Yau inequality for the heat equation corresponding to the Dunkl harmonic oscillator, which is a non-local Schr\"{o}dinger operator parameterized by reflections and multiplicity functions. In the particular case when the reflection group is isomorphic to $\mathbb{Z}_2^d$, the result is sharp in the sense that equality is achieved by the heat kernel of the classic harmonic oscillator. We also provide the application on parabolic Harnack inequalities.
\end{abstract}

{\bf MSC 2010:} primary 35K08, 33C52; secondary 33C80, 60J60, 60J75, 58J35

{\bf Keywords:} Dunkl harmonic ocillator; heat kernel; Li--Yau inequality; parabolic Harnack inequality

\section{Introduction and main results}\hskip\parindent
Let $M$ be a $d$-dimensional complete Riemannian manifold without boundary and with non-negative Ricci curvature, $\rho$ be the geodesic distance on $M$, $|\cdot|$ be the length in the tangent space, $\Delta$ be the Laplace--Beltrami operator and $\nabla$ be the Riemannian gradient. Let $V:[0,\infty)\times M\rightarrow\R$ such that $V(t,x)$ is $C^1$ in $t$ and $C^2$ in $x$. Suppose that there exist some point $o\in M$, a constant $\vartheta$ and functions $\eta:[0,\infty)\times[0,\infty)\rightarrow[0,\infty)$ and $\tau:[0,\infty)\rightarrow[0,\infty)$ such that
\begin{eqnarray*}
&&|\nabla V(t,\cdot)|(x)\leq \eta\big(\rho(o,x),t\big),\quad \Delta V(t,\cdot)(x)\leq\vartheta,\\
&&\lim_{r\rightarrow\infty}\frac{\eta(r,t)}{r}\leq \tau(t),
\end{eqnarray*}
for any $(t,x)\in(0,\infty)\times M$. In the seminal paper \cite{LiYau86}, Li and Yau obtained the following pointwise inequality, i.e., for every positive solution $u$ to the Schr\"{o}dinger equation
\begin{equation}\label{GHE}
\partial_t u =\Delta u - V u,\quad\mbox{on }(0,\infty)\times M,
\end{equation}
it holds that\footnote{Note that the term $\tau(t)^{2/3}$ appearing in the original result \cite[Theorem 1.3(i), page 163]{LiYau86} should be $\tau(t)^{1/2}$. We do not know if it has been pointed out somewhere else.}
\begin{eqnarray}\label{LY86-1}
\frac{|\nabla u(t,\cdot)|^2(x)}{u(t,x)^2}-\frac{\partial_t u(t,x)}{u(t,x)}\leq\frac{d}{2t}+\sqrt{\frac{d}{2}\vartheta}+ c\tau(t)^{\frac{1}{2}}+V(t,x),\quad t>0,\, x\in M,
\end{eqnarray}
for some constant $c>0$. Noting that $u$ is a positive solution to \eqref{GHE} and the fact that $\Delta \log u=\Delta u/u-|\nabla u|^2/u^2$, we immediately see that \eqref{LY86-1} is equivalent to
\begin{eqnarray}\label{LY86-2}
-\Delta\big(\log u(t,\cdot)\big)(x)\leq\frac{d}{2t} +\sqrt{\frac{d}{2}\vartheta}+ c\tau(t)^{1/2},\quad t>0,\, x\in M.
\end{eqnarray}
See e.g. \cite{Lijy91,Negrin95} for related studies in the Schr\"{o}dinger case. In particular, when $V$ vanishes, \eqref{LY86-1} and \eqref{LY86-2} reduce to the Li--Yau inequality for positive solutions to the heat equation $\partial_t u=\Delta u$ on $(0,\infty)\times M$, i.e.,
\begin{eqnarray}\label{LY86-3}
-\Delta\big(\log u(t,\cdot)\big)(x)=\frac{|\nabla u(t,\cdot)|^2(x)}{u(t,x)^2}-\frac{\partial_t u(t,x)}{u(t,x)}\leq\frac{d}{2t},\quad t>0,\, x\in M.
\end{eqnarray}

Being an important tool in the study of analytic and geometric properties of manifolds, Li--Yau inequalities have been successively applied to derive parabolic Harnack inequalities, estimate heat kernels and the Green functions, obtain eigenvalue estimates, and  establish Laplacian comparison theorems, etc.

It is well known that \eqref{LY86-3} is sharp in the sense that equality is achieved for the fundamental solution to the heat equation on the $d$-dimensional Euclidean space $\R^d$ (see e.g. \eqref{Gauss-kernel} below). Up to now, there are quite a few works on improving the Li--Yau inequality corresponding to heat equations for small time and large time on Riemannian manifolds; see e.g. \cite{Hamilton93,Davies89,fyw,Li-Xu11,BBG2017,YZ2020,ZhangQS} and references therein.

Recently, in \cite{YZ2020}, sharp Li--Yau inequalities for the Laplace--Beltrami operator on hyperbolic spaces were obtained by employing the explicit formula for the corresponding heat kernel. Very recently, in \cite{WZ2021}, similar to the idea of \cite{YZ2020}, the Li--Yau inequality for the fractional Laplacian has been proved; see also the conjectures on Li--Yau inequalities of gradient type for the fractional Laplacian at the end of \cite[Section 21]{Garo2018}, where the ``gradient'' is given by the carr\'{e} du champ operator induced by the fractional Laplacian. We should mention that there are works on the Li--Yau inequality in the setting of graphs via various curvature-dimension conditions in the sense of Bakry--Emery \cite{BE1985}; see e.g. \cite{BHLLMY,Qian2017,Munch2018,DKZ} and references there in.

So, a natural question is on the sharpness of the Li--Yau inequality \eqref{LY86-1} and \eqref{LY86-2} for the  Schr\"{o}dinger equation $\partial_t u=(\Delta-V)u$. Indeed, on $\R^d$, a typical example of the potential $V$ assumed above is $V(t,x)=|x|^2$. In the present work, we consider the Dunkl harmonic oscillator $L_\kappa$ on $\R^d$ given by the generalized Laplacian or Dunkl Laplacian $\Delta_\kappa$ and the potential $|x|^2$, i.e., $L_\kappa:=\Delta_\kappa-|x|^2$,
which is a non-local Schr\"{o}dinger operator; see Section 2 for more details. We mainly focus on the establishment of the sharp Li--Yau inequality for positive solutions to the equation $(\partial_t-L_\kappa)u=0.$

In the next section, we present some basics on the Dunkl theory and introduce our main results.

\section{Preparations and main results}\hskip\parindent
Let $|\cdot|$ and $\langle\cdot,\cdot\rangle$ be the norm and the scalar product on $\R^d$, respectively. For every $\alpha\in\R^d\setminus\{0\}$, denote the hyperplane orthogonal to $\alpha$ by $\alpha^\bot$, i.e., $\alpha^\bot=\{x\in\R^d: \langle\alpha,x\rangle=0\}$, and denote $\sigma_\alpha$ the reflection in $\alpha^\bot$ by
$$\sigma_\alpha x =x-2\frac{\langle \alpha,x\rangle}{|\alpha|^2}\alpha,\quad x\in\R^d.$$

Let $\mathcal{R}$ be a root system on $\R^d$, which is a finite set of nonzero vectors in $\R^d$ so that, for each $\alpha\in\mathcal{R}$, $\sigma_\alpha(\mathcal{R})=\mathcal{R}$ and $\alpha\R\cap \mathcal{R}=\{-\alpha,\alpha\}$, where $\alpha\R:=\{a\alpha: a\in\R\}$. Without loss of generality, we assume that $|\alpha|^2=2$ for every $\alpha\in\mathcal{R}$. Denote the  reflection group generated by the root system $\mathcal{R}$ by $G$. Let $\kappa: \mathcal{R}\rightarrow[0,\infty)$ be the multiplicity function such that $\kappa$ is $G$-invariant, i.e., $\kappa_{g\beta}=\kappa_\beta$ for every $g\in G$ and every $\beta\in\mathcal{R}$.

Fix a subsystem $\mathcal{R}_+$ of $\mathcal{R}$. For every $\xi\in\R^d$, the Dunkl operator $\D_\xi$ along $\xi$ associated with $G$ and $\kappa$, introduced by C.F. Dunkl in \cite{Dunkl1989},  is defined by
$$\D_\xi f(x)=\partial_\xi f(x)+\sum_{\alpha\in\mathcal{R}_+}\kappa_\alpha \langle\alpha,\xi\rangle \frac{f(x)-f(\sigma_\alpha x)}{\langle\alpha,x\rangle},\quad f\in C^1(\R^d),\,x\in\R^d,$$
where $\partial_\xi$ denotes the directional derivative along $\xi$. For convenience, write $\D_j$ for $\D_{\e_j}$ and $\partial_j$ for $\partial_{\e_j}$, $j=1,2,\cdots,d$, where $\{\e_j\}_{j=1}^d$ is the standard orthonormal system of $\R^d$. Let
$$\nabla_\kappa=(\D_1,\cdots,\D_d),\quad \Delta_\kappa=\sum_{j=1}^d \D_j^2$$
be the Dunkl gradient and the Dunkl Laplacian, respectively. It is easy to see that
$$\Delta_\kappa f(x)=\Delta f(x)+2\sum_{\alpha\in\mathcal{R}_+}\kappa_\alpha\left[\frac{\langle\alpha,\nabla f\rangle}{\langle\alpha,x\rangle} - \frac{f(x)-f(\sigma_\alpha x)}{\langle\alpha,x\rangle^2}\right],\quad f\in C^2(\R^d),\,x\in\R^d.$$
In particular, when $\kappa=0$, $\nabla_0=\nabla$ and $\Delta_0=\Delta$ are the standard gradient operator and the standard Laplacian on $\R^d$, respectively. However, $\Delta_\kappa$ and $\nabla_\kappa$ may not satisfy the chain rule and the Leibniz rule.

Let $\mu_{\kappa}(\d x)=w_\kappa(x)\d x$, where $\d x$ denotes the Lebesgue measure on $\R^d$ and $w_\kappa$ is the natural weight function defined by
$$w_\kappa(x)=\prod_{\alpha\in\mathcal{R}_+}|\langle \alpha,x \rangle|^{\kappa_\alpha},\quad x\in\R^d.$$
It is easy to see that $w_\kappa$ is $G$-invariant and a homogeneous function of degree
$$\lambda_\kappa=\sum_{\alpha\in\mathcal{R}_+}\kappa_\alpha.$$
The number $d+2\lambda_\kappa$ should be regarded as the homogeneous dimension of the Dunkl system due to the scaling property (see e.g. \cite[page 2365]{ADH2019}), i.e., for every ball $B(x,R)$ in $\R^d$ with center $x$ and radius $R>0$, it is easy to see that
$$\mu_\kappa\big(B(\gamma x,\gamma R)\big)=\gamma ^{d+2\lambda_\kappa}\mu_\kappa\big(B(x,R)\big),\quad \gamma>0.$$

For more details on the Dunkl theory, refer to the survey papers \cite{Rosler2003,Anker2017} and the books \cite{DunklXu2014,DX2015}. For the application of the Dunkl theory in mathematical physics, see \cite[Section 3]{Rosler2003} and references therein. Moreover, from the probabilistic point of view, the stochastic process (also called Dunkl process) corresponding to the Dunkl Laplacian $\Delta_\kappa$ is a Markov jump process but not a L\'{e}vy process if $\kappa>0$. See e.g. \cite{RosVoi1998}, \cite{GaYor2005} and \cite[Section 3]{LZ2020} for some probabilistic aspects of the Dunkl theory  and see e.g. \cite{Sato1999} for more details on L\'{e}vy processes.

In this work, we mainly consider the Dunkl harmonic oscillator on $\R^d$, i.e.,
$$L_\kappa:=\Delta_\kappa-|x|^2.$$
which clearly reduces to the classic harmonic oscillator $L:=\Delta-|x|^2$ when $\kappa=0$. Let $(H_t)_{t\geq0}$ be the semigroup generated by $L_\kappa$, and let  $(h_t)_{t>0}$ be the corresponding heat kernel with respect to $\mu_\kappa$. See e.g. \cite{NS2009} and Section 3 for more details.

Now we are ready to present our main results. The first one is on the Li--Yau inequality for the heat kernel of the Dunkl harmonic oscillator in the particular case when the reflection group $G$ is isomorphic to the Abelian group $\mathbb{Z}_2^d$.
\begin{theorem}\label{main-thm-1}
Suppose that $G$ is isomorphic to $\mathbb{Z}_2^d$. Then, for every $t>0$ and every $x,y\in\R^d$,
\begin{equation}\begin{split}\label{LY-dhc-1}
-\Delta_\kappa\big(\log h_t(\cdot,y)\big)(x)&\leq (d+2\lambda_\kappa)\coth(2t)\\
&\leq (d+2\lambda_\kappa)\left(\frac{1}{2t}+1\right).
\end{split}\end{equation}
\end{theorem}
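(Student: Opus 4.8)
The plan is to exploit the assumption $G\cong\mathbb{Z}_2^d$, under which the root system splits along the coordinate axes and the Dunkl Laplacian $\Delta_\kappa$ decomposes as a direct sum of one-dimensional operators $\sum_{j=1}^d \mathcal{L}_{\kappa_j}^{(j)}$ acting on $x_j$. Correspondingly, the heat kernel $h_t(x,y)$ of $L_\kappa=\Delta_\kappa-|x|^2$ factorizes as a product $\prod_{j=1}^d h_t^{(\kappa_j)}(x_j,y_j)$ of one-dimensional kernels, and $-\Delta_\kappa(\log h_t(\cdot,y))(x)=\sum_{j=1}^d\big(-\mathcal{L}_{\kappa_j}^{(j)}(\log h_t^{(\kappa_j)}(\cdot,y_j))\big)(x_j)$. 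Thus everything reduces to a sharp one-dimensional estimate, and the factor $d+2\lambda_\kappa=\sum_{j=1}^d(1+2\kappa_j)$ in \eqref{LY-dhc-1} is exactly $\sum_j(1+2\kappa_j)$, so it suffices to show that in dimension one, for the operator $\mathcal{L}_\nu$ with multiplicity parameter $\nu\geq0$, one has $-\mathcal{L}_\nu(\log h_t^{(\nu)}(\cdot,y))(x)\leq(1+2\nu)\coth(2t)$.

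The next step is to write down the one-dimensional Dunkl harmonic oscillator heat kernel explicitly. This is classical (see e.g. \cite{NS2009,Rosler2003}): it is given in terms of a Mehler-type formula involving the modified Bessel function $I_{\nu-1/2}$ — roughly, $h_t^{(\nu)}(x,y)$ is a Gaussian-type factor $\exp\big(-\tfrac12\coth(2t)(x^2+y^2)\big)$ times $(xy/\sinh(2t))^{-\nu+1/2}I_{\nu-1/2}\big(xy/\sinh(2t)\big)$, up to normalization depending only on $t$ (not on $x,y$). I would then compute $-\mathcal{L}_\nu(\log h_t^{(\nu)}(\cdot,y))(x)$ directly. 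The $t$-dependent normalization contributes nothing after applying $\mathcal{L}_\nu$ in the $x$-variable; the Gaussian factor contributes $(1+2\nu)\coth(2t)$ minus a term coming from the quadratic form together with the $|x|^2$ potential, which should combine to give precisely $\coth(2t)$ times the number of degrees of freedom; and the Bessel factor contributes a remainder term $R(x,y,t)$ whose sign must be controlled. The content of the inequality is that the Bessel-function contribution has the right sign, i.e. $-\mathcal{L}_\nu$ applied to $\log$ of the Bessel factor is $\leq 0$ — equivalently, a concavity/log-concavity statement for $s\mapsto s^{-\nu+1/2}I_{\nu-1/2}(s)$ expressed through the Dunkl operator.

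The main obstacle, as usual in the Dunkl setting, is the nonlocal reflection term: $\mathcal{L}_\nu$ applied to a function of $x$ involves both $f(x)$ and $f(-x)$, and since $h_t^{(\nu)}$ is even in $x$ this reflection term is a genuine finite-difference contribution $-2\nu\,\big(f(x)-f(-x)\big)/x^2$ with $f=\log h_t^{(\nu)}$, which is manifestly nonpositive only after one checks that $x\mapsto\log h_t^{(\nu)}(x,y)$ is suitably behaved (e.g. that $h_t^{(\nu)}(x,y)\geq h_t^{(\nu)}(-x,y)$ when $xy>0$, which follows from monotonicity of $I_{\nu-1/2}$). So the key computation is to show that the local second-order part yields exactly $(1+2\nu)\coth(2t)$ plus a nonpositive Bessel remainder, and that the nonlocal part is also nonpositive; I expect the Bessel remainder to be handled via the Riccati-type ODE satisfied by $I_{\nu-1/2}$ (equivalently, via the known log-concavity of the Gaussian--Bessel profile), which is precisely the one-dimensional sharp Li--Yau computation underlying the classical case $\nu=0$. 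Finally, the second inequality in \eqref{LY-dhc-1} is immediate from the elementary bound $\coth(2t)\leq\tfrac{1}{2t}+1$ for $t>0$, which one checks by noting $\coth s - 1/s$ is increasing and $\coth s \le 1/s + s/3 \le \ldots$; more simply, $2t\coth(2t)\le 1+2t$ is equivalent to $s\coth s\le 1+s$ for $s>0$, which holds since $s\coth s - 1 = s(\cosh s-\sinh s/s)/\sinh s \le s$ after an elementary estimate. Equality throughout, when $\kappa=0$, is achieved by the Mehler kernel of the classical harmonic oscillator, giving sharpness.
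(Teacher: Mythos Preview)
Your reduction to dimension one via the product structure is correct and matches the paper. The one-dimensional argument, however, has a real gap. First, the kernel formula you write down is incomplete: $h_t^{(\nu)}(x,y)$ is \emph{not} even in $x$. The correct expression (see \eqref{hc-kernel-1} or \eqref{hc-kernel-2}) contains, in addition to the $I_{\nu-1/2}$ term, an odd term proportional to $xy\,(xy)^{-\nu-1/2}I_{\nu+1/2}\!\big(xy/\sinh(2t)\big)$; equivalently, in the Schl\"afli form the weight $g(s)=(1-s)^{\nu-1}(1+s)^\nu$ is asymmetric, and one checks that $E(u,v):=\int_{-1}^{1}g(s)\,e^{uvs/\sinh(2t)}\,\d s$ satisfies $E(u,v)>E(-u,v)$ whenever $uv>0$. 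Second, the reflection contribution to $-\mathcal{L}_\nu(\log h_t^{(\nu)})$ is $+\tfrac{\nu}{u^2}\big[\log h_t^{(\nu)}(u,v)-\log h_t^{(\nu)}(-u,v)\big]$ (your sign and coefficient are off), and by the previous inequality this term is \emph{positive} when $uv>0$. So your proposed splitting into ``local second-order part $\leq (1+2\nu)\coth(2t)$'' plus ``nonlocal part $\leq 0$'' cannot work: the very monotonicity you invoke, $h_t^{(\nu)}(x,y)\geq h_t^{(\nu)}(-x,y)$ for $xy>0$, pushes the reflection term the wrong way.

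The paper splits differently. The pure second derivative alone gives $-\partial_{uu}^2\log h_t^{(\nu)}\leq \coth(2t)$ by an immediate Cauchy--Schwarz on the integral $E$, and then the first-order drift $\tfrac{2\nu}{u}\partial_u$ is grouped \emph{together with} the reflection term. After stripping the Gaussian factor, bounding this combined piece by $2\nu\coth(2t)$ reduces to proving that
\[
\phi(a):=2a\,\frac{\int_{-1}^1 s\,g(s)e^{as}\,\d s}{\int_{-1}^1 g(s)e^{as}\,\d s}+\log\frac{\int_{-1}^1 g(s)e^{-as}\,\d s}{\int_{-1}^1 g(s)e^{as}\,\d s}\geq 0,\qquad a\in\R.
\]
This is the heart of the proof and requires a genuine argument: one shows $\phi(0)=0$ and that $\phi'(a)$ has the sign of $a$, via two applications of Cauchy--Schwarz and a monotonicity computation for an auxiliary function. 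Your Riccati/log-concavity remark would at best recover the $-\partial_{uu}^2$ bound (log-concavity of $u\mapsto E(u,v)$), which is exactly the classical $\nu=0$ computation and the trivial step here; it does not touch the drift--reflection interaction, which is where the actual difficulty for $\nu>0$ lives.
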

\begin{remark}
The result improves \eqref{LY86-2} and is sharp in the sense that equality in the first inequality of \eqref{LY-dhc-1} is achieved by the heat kernel of the harmonic oscillator. Indeed, if $\kappa=0$, then $L_\kappa$ becomes the harmonic oscillator $L$. It is well known that the heat kernel $h_t(x,y)$ corresponding to $L$ has the following expression, i.e.,
$$h_t(x,y)=\frac{1}{[2\pi\sinh(2t)]^d}\exp\left(-\frac{1}{4}\left[\tanh(t)|x+y|^2+\coth(t)|x-y|^2\right]\right),$$
for every $t>0,\,x,y\in\R^d$; see e.g. \cite[page 453]{StemTor2003}. Then it is easy to see that
\begin{eqnarray*}
-\Delta\big(\log h_t(\cdot,y)\big)(x)&=&d \coth(2t)\\
&\leq&d\left(1+\frac{1}{2t}\right),\quad t>0,\,x,y\in\R^d.
\end{eqnarray*}
\end{remark}

In order to obtain the sharp Li--Yau inequality for solutions to the equation $\partial_t u=L_\kappa u$, we rely on a general result which 
transforms the problem into an equivalent one, i.e., establishing the Li--Yau inequality for the corresponding heat kernel. The idea is motivated by the continuous setting on Riemannian manifolds in \cite[Theorem 1.1]{YZ2020} and the pure jump setting on metric measure spaces in \cite[Theorem 2.4]{WZ2021}.

From now on, we fix $T\in(0,\infty]$. Let $V:(0,T)\times\R^d\rightarrow\R$ be a potential. Consider the equation associated with the non-local Schr\"{o}dinger operator $L_V:=\Delta_\kappa-V$, i.e.,
\begin{equation}\label{Dun-Sch-equ}
\partial_t u(t,x)=L_Vu(t,\cdot)(x),\quad (t,x)\in(0,T)\times\R^d.
\end{equation}
Let $\mathcal{S}(L_V)$ be the class of all solutions $u:[0,T)\times\R^d\rightarrow(0,\infty)$ to \eqref{Dun-Sch-equ} such that
$(0,T)\ni t\mapsto u(t,x)$ is $C^1$ for every $x\in \R^d$ and $\R^d\ni x\mapsto u(t,x)$ is $C^2$ for every $t\in(0,T)$.

In the next main result, we will make the following hypothesis.
\begin{itemize}
\item[({\rm \textbf{H}})] For each $u\in\mathcal{S}(L_V)$, there exists a function $h^V: (0,T)\times\R^d\times\R^d\rightarrow(0,\infty)$ such that, $y\mapsto h_t^V(x,y)$ is Borel measurable for every $(t,x)\in(0,T)\times\R^d$, $(t,x)\mapsto h_t^V(x,y)\in\mathcal{S}(L_V)$ for every $y\in\R^d$, and
 \begin{equation*}\begin{split}
 u(t,x)&=\int_{\R^d}u(0,y)h^V_t(x,y)\,\d\mu_\kappa(y),\\
 \partial_t u(t,x)&=\int_{\R^d}u(0,y)\partial_t h^V_t(x,y)\,\d\mu_\kappa(y),
\end{split}\end{equation*}
for every $(t,x)\in(0,T)\times\R^d$.
\end{itemize}

\begin{theorem}\label{main-thm-2}
Let $\beta$ be a function defined on $(0,T)\times \R^d$. Assume that $\mathcal{S}(L_V)\neq\emptyset$ and $({\rm \textbf{H}})$ holds. Then
\begin{eqnarray}\label{thm-2-0}
-\Delta_\kappa\big(\log h_t^V(\cdot,y)\big)(x)\leq \beta(t,x), \quad (t,x,y)\in (0,T)\times \R^d\times \R^d,
\end{eqnarray}
is equivalent to that, for every $u\in\mathcal{S}(L_V)$,
\begin{eqnarray}\label{thm-2-1}
-\Delta_\kappa\big(\log u(t,\cdot)\big)(x)\leq \beta(t,x), \quad (t,x)\in (0,T)\times \R^d.
\end{eqnarray}
In addition, either \eqref{thm-2-0} or \eqref{thm-2-1} implies that, for every $u\in\mathcal{S}(L_V)$,
\begin{eqnarray}\label{thm-2-1+}
\frac{|\nabla u(t,\cdot)|^2(x)}{u(t,x)^2}-\frac{\partial_t u(t,x)}{u(t,x)}\leq  \beta(t,x)+ V(t,x), \quad (t,x)\in (0,T)\times \R^d.
\end{eqnarray}
\end{theorem}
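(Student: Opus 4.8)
The plan is to exploit the integral representation in hypothesis $({\rm \textbf{H}})$ together with the fact that $\Delta_\kappa$ does not satisfy the chain rule, which forces us to unwind $\Delta_\kappa(\log\cdot)$ explicitly. Recall that for a positive $C^2$ function $f$ on $\R^d$,
\begin{equation*}
-\Delta_\kappa\big(\log f\big)(x)=\frac{|\nabla f(x)|^2}{f(x)^2}-\frac{\Delta f(x)}{f(x)}+\sum_{\alpha\in\mathcal{R}_+}2\kappa_\alpha\frac{f(x)-f(\sigma_\alpha x)}{f(x)\langle\alpha,x\rangle^2}-\sum_{\alpha\in\mathcal{R}_+}2\kappa_\alpha\frac{\log f(x)-\log f(\sigma_\alpha x)}{\langle\alpha,x\rangle^2},
\end{equation*}
so that the ``local'' part behaves like the Riemannian case but there are extra jump-type terms. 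First I would establish the implication \eqref{thm-2-0} $\Rightarrow$ \eqref{thm-2-1}. Fix $u\in\mathcal{S}(L_V)$ and $(t,x)\in(0,T)\times\R^d$. Using $({\rm \textbf{H}})$ one writes $u(t,x)=\int u(0,y)h_t^V(x,y)\,\d\mu_\kappa(y)$; the key point is that $-\Delta_\kappa(\log h_t^V(\cdot,y))(x)\le\beta(t,x)$ means, after multiplying through by $h_t^V(x,y)$, that $-\Delta_\kappa h_t^V(\cdot,y)(x)\le\beta(t,x)h_t^V(x,y)+(\text{a convexity remainder in }h_t^V)$. One then integrates against the positive measure $u(0,y)\,\d\mu_\kappa(y)$, uses that $\Delta_\kappa$ (a linear operator acting in $x$) commutes with the integral by the second displayed identity in $({\rm \textbf{H}})$ (so $\Delta_\kappa u(t,\cdot)(x)=\int u(0,y)\Delta_\kappa h_t^V(\cdot,y)(x)\,\d\mu_\kappa(y)$), and finally applies Jensen's inequality to transfer the pointwise convexity estimate from $h_t^V(x,\cdot)$ to $u(t,x)$. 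This is the standard ``superposition'' argument of \cite{YZ2020,WZ2021}; the only subtlety is keeping track of the nonlocal correction terms so that the inequality direction is preserved.

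For the converse \eqref{thm-2-1} $\Rightarrow$ \eqref{thm-2-0} there is nothing to prove: by $({\rm \textbf{H}})$, for each fixed $y\in\R^d$ the function $(t,x)\mapsto h_t^V(x,y)$ itself lies in $\mathcal{S}(L_V)$, so \eqref{thm-2-1} applied to this particular solution yields \eqref{thm-2-0} verbatim.

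It remains to derive \eqref{thm-2-1+} from \eqref{thm-2-1}. This is a purely pointwise manipulation: for $u\in\mathcal{S}(L_V)$ one has, from the definition of $\Delta_\kappa$ and $\nabla_\kappa$ and the product structure of the Dunkl operator, the identity
\begin{equation*}
-\Delta_\kappa\big(\log u(t,\cdot)\big)(x)=\frac{|\nabla_\kappa u(t,\cdot)|^2(x)}{u(t,x)^2}-\frac{\Delta_\kappa u(t,\cdot)(x)}{u(t,x)}+\mathcal{J}_u(t,x),
\end{equation*}
where $\mathcal{J}_u(t,x)\ge 0$ collects the nonlocal discrepancy (by convexity of $-\log$ each summand $\tfrac{f(x)-f(\sigma_\alpha x)}{\langle\alpha,x\rangle^2}-\tfrac{\log f(x)-\log f(\sigma_\alpha x)}{\langle\alpha,x\rangle^2}\ge 0$), while $|\nabla_\kappa u|^2\ge|\nabla u|^2$ since the Dunkl gradient dominates the Euclidean one in the $\mathbb{Z}_2^d$-type expansion. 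Using the equation \eqref{Dun-Sch-equ}, namely $\Delta_\kappa u(t,\cdot)(x)=\partial_t u(t,x)+V(t,x)u(t,x)$, one substitutes and obtains
\begin{equation*}
\frac{|\nabla u(t,\cdot)|^2(x)}{u(t,x)^2}-\frac{\partial_t u(t,x)}{u(t,x)}\le\frac{|\nabla_\kappa u(t,\cdot)|^2(x)}{u(t,x)^2}-\frac{\partial_t u(t,x)}{u(t,x)}\le-\Delta_\kappa\big(\log u(t,\cdot)\big)(x)+V(t,x),
\end{equation*}
and \eqref{thm-2-1} finishes the proof. The main obstacle I anticipate is the bookkeeping in the first implication: unlike the Riemannian setting, $\Delta_\kappa(\log\cdot)$ is not simply $\Delta_\kappa u/u-|\nabla_\kappa u|^2/u^2$, so one must verify carefully that every nonlocal remainder term appearing after dividing by $h_t^V$ and integrating has a sign compatible with Jensen's inequality; getting that sign analysis right (and checking the integrability needed to pass $\Delta_\kappa$ under the integral, which is exactly what $({\rm \textbf{H}})$ is designed to supply) is where the real work lies.
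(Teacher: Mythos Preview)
Your overall architecture matches the paper's proof: the implication \eqref{thm-2-1}$\Rightarrow$\eqref{thm-2-0} is indeed trivial by specializing to $u=h^V_t(\cdot,y)$, and the hard direction \eqref{thm-2-0}$\Rightarrow$\eqref{thm-2-1} is, as you say, a superposition argument where one integrates the kernel inequality against $u(0,y)\,\d\mu_\kappa(y)$ and uses convexity to close. The paper makes this precise by treating the local and nonlocal pieces separately: Cauchy--Schwarz for the gradient term (your ``Jensen'' in the diffusion part) and the convexity of $\eta(t)=t-\log t-1$ for the reflection terms. You correctly flag that the sign analysis of the nonlocal remainder is the delicate point, and the paper's computation ${\rm I}-{\rm II}\ge 0$ is exactly that check.

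There is, however, a genuine error in your derivation of \eqref{thm-2-1+}. The chain-rule identity you need (this is Lemma~\ref{chain-rule} of the paper with $\psi=\log$) reads
\[
-\Delta_\kappa\big(\log u\big)(x)=\frac{|\nabla u|^2(x)}{u(x)^2}-\frac{\Delta_\kappa u(x)}{u(x)}-\Pi_{\log}(u)(x),
\]
with the \emph{Euclidean} gradient $\nabla$, not the Dunkl gradient $\nabla_\kappa$. Your displayed identity replaces $|\nabla u|^2$ by $|\nabla_\kappa u|^2$ and then invokes ``$|\nabla_\kappa u|^2\ge|\nabla u|^2$ since the Dunkl gradient dominates the Euclidean one''. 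That domination is false in general: for $\D_j f=\partial_j f+\tfrac{\kappa_j}{x_j}\big(f(x)-f(\sigma_j x)\big)$ the cross term $2\partial_j f\cdot\tfrac{\kappa_j}{x_j}\big(f(x)-f(\sigma_j x)\big)$ has no sign. Fortunately the detour is unnecessary: with the correct identity one substitutes $\Delta_\kappa u=\partial_t u+Vu$ and uses only that $\Pi_{\log}(u)\le 0$ (concavity of $\log$, i.e.\ $\log a-\log b-\tfrac{a-b}{b}\le 0$) to get
\[
\frac{|\nabla u|^2}{u^2}-\frac{\partial_t u}{u}=-\Delta_\kappa(\log u)+V+\Pi_{\log}(u)\le -\Delta_\kappa(\log u)+V\le \beta+V,
\]
which is \eqref{thm-2-1+}. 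Note also that your parenthetical sign check ``$\tfrac{f(x)-f(\sigma_\alpha x)}{\langle\alpha,x\rangle^2}-\tfrac{\log f(x)-\log f(\sigma_\alpha x)}{\langle\alpha,x\rangle^2}\ge 0$'' is not the right quantity and is not sign-definite; the correct remainder is $\pi_{\log}\big(f(\sigma_\alpha x),f(x)\big)=\log\tfrac{f(\sigma_\alpha x)}{f(x)}-\tfrac{f(\sigma_\alpha x)-f(x)}{f(x)}\le 0$.
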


Taking $V(t,x)=|x|^2$ for all $t\in[0,T)$, by Theorems \ref{main-thm-2} and \ref{main-thm-1}, we immediately obtain the following Li--Yau inequality. We say that a function $f:[0,T)\times\R^d\rightarrow I$ is $C^{1,2}$ if $(0,T)\ni t\mapsto f(t,x)$ is $C^1$ for every $x\in \R^d$ and $\R^d\ni x\mapsto u(t,x)$ is $C^2$ for every $t\in(0,T)$, where $I$ is a subset of $\R$.
\begin{corollary}\label{LY-dho}
Suppose that $G$ is isomorphic to $\mathbb{Z}_2^d$. Then for every $C^{1,2}$ solution $u:[0,T)\times \R^d\rightarrow(0,\infty)$ to the equation $\partial_tu=L_\kappa u$, it holds that
\begin{equation}\begin{split}\label{LY-dho-1}
-\Delta_\kappa\big(\log u(t,\cdot)\big)(x)&\leq (d+2\lambda_\kappa)\coth(2t)\\
&\leq (d+2\lambda_\kappa)\left(\frac{1}{2t}+1\right),\quad T>t>0,\,x\in\R^d.
\end{split}\end{equation}
\end{corollary}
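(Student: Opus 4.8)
The plan is to deduce the statement by specializing Theorem \ref{main-thm-2} to the potential $V(t,x)=|x|^2$, for which $L_V=\Delta_\kappa-|x|^2=L_\kappa$, and then inserting the heat kernel bound of Theorem \ref{main-thm-1}. The candidate for the kernel $h^V$ appearing in hypothesis $({\rm \textbf{H}})$ is of course the heat kernel $(h_t)_{t>0}$ of $L_\kappa$ with respect to $\mu_\kappa$ introduced in Section 2 (see also \cite{NS2009}). First one notes that $\mathcal{S}(L_\kappa)\neq\emptyset$: by the smoothness and strict positivity of the Dunkl harmonic oscillator heat kernel, each $h_t(\cdot,y)$ is a positive $C^{1,2}$ solution of $\partial_t u=L_\kappa u$, hence lies in $\mathcal{S}(L_\kappa)$; moreover, by the very definitions of $\mathcal{S}(L_\kappa)$ and of a $C^{1,2}$ solution, the class $\mathcal{S}(L_\kappa)$ is exactly the set of $C^{1,2}$ positive solutions to $\partial_t u=L_\kappa u$ considered in the corollary.

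The substantive point is to verify hypothesis $({\rm \textbf{H}})$ for this $V$, i.e.\ that every $u\in\mathcal{S}(L_\kappa)$ admits the representation $u(t,x)=\int_{\R^d}u(0,y)h_t(x,y)\,\d\mu_\kappa(y)$ together with the formula obtained by differentiating it once in $t$. For the representation itself I would appeal to uniqueness of the Cauchy problem for $\partial_t u=L_\kappa u$ within the relevant class of functions (with suitable growth at infinity), using the Gaussian-type two-sided bounds and the regularity of $h_t$ established in \cite{NS2009}; the identity $\partial_t u(t,x)=\int_{\R^d}u(0,y)\partial_t h_t(x,y)\,\d\mu_\kappa(y)$ then follows by differentiation under the integral sign, justified by the same heat kernel estimates applied to the difference quotients in $t$ via dominated convergence. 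This verification of $({\rm \textbf{H}})$ --- making sure the solution class is not larger than what the kernel representation covers, and that the $t$-differentiation commutes with the $\mu_\kappa$-integral --- is the only real obstacle; everything else is a substitution.

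Granting $({\rm \textbf{H}})$, the argument concludes quickly. Since $G\cong\mathbb{Z}_2^d$, Theorem \ref{main-thm-1} gives $-\Delta_\kappa\big(\log h_t(\cdot,y)\big)(x)\leq (d+2\lambda_\kappa)\coth(2t)$ for all $t>0$ and $x,y\in\R^d$; that is, \eqref{thm-2-0} holds with the choice $\beta(t,x)=(d+2\lambda_\kappa)\coth(2t)$, which incidentally does not depend on $x$. By the equivalence in Theorem \ref{main-thm-2}, \eqref{thm-2-1} holds for every $u\in\mathcal{S}(L_\kappa)$, namely $-\Delta_\kappa\big(\log u(t,\cdot)\big)(x)\leq (d+2\lambda_\kappa)\coth(2t)$ on $(0,T)\times\R^d$, which is precisely the first inequality in \eqref{LY-dho-1}. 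The second inequality is the elementary estimate $\coth(s)\leq \frac1s+1$ for $s>0$ --- note that $\coth(s)-\frac1s$ increases from $0$ to $1$ on $(0,\infty)$, since $\csch(s)<\frac1s$ there --- applied with $s=2t$; it is also already recorded in Theorem \ref{main-thm-1}. This completes the plan.
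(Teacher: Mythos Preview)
Your proposal is correct and follows essentially the same approach as the paper: the corollary is stated there as an immediate consequence of Theorems \ref{main-thm-1} and \ref{main-thm-2} with $V(t,x)=|x|^2$, exactly as you outline. You are in fact more explicit than the paper about the one genuine subtlety, namely checking hypothesis $({\rm \textbf{H}})$ (the kernel representation and differentiation under the integral), which the paper simply takes for granted; your caveat about needing a suitable uniqueness/growth class to make the representation valid is well placed, since neither the corollary nor the paper spells this out.
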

\begin{remark}
Consider the case when $\kappa=0$. Then $L_\kappa$ reduces to $L=\Delta-|x|^2$ and $\lambda_\kappa=0$. Note that even the weaker inequality in \eqref{LY-dho-1} improves \eqref{LY86-2} by getting rid of a positive constant on the right hand side of  \eqref{LY86-2}.
\end{remark}

An important direct consequence of the above Li--Yau inequalities is the following sharp parabolic Harnack inequalities.
\begin{corollary}\label{harnack}
Assume that $u:[0,T)\times \R^d\rightarrow (0,\infty)$ is a $C^{1,2}$ solution to the equation  $\partial_tu=L_\kappa u$ and the reflection group $G$ is isomorphic to $\mathbb{Z}_2^d$. Then for every $0<s<t<T$ and every $x,y\in\R^d$,
\begin{eqnarray*}
u(s,x)&\leq& u(t,y)\left(\frac{\sinh(2t)}{\sinh(2s)}\right)^{\frac{d+2\lambda_\kappa}{2}}
\exp\left(\frac{|x-y|^2}{4(t-s)}+(t-s)\varsigma(x,y)\right)\\
&\leq& u(t,y)\left(\frac{t}{s}\right)^{\frac{d+2\lambda_\kappa}{2}}
\exp\left(\frac{|x-y|^2}{4(t-s)}+(t-s)\big[d+2\lambda_\kappa+\varsigma(x,y)\big]\right),
\end{eqnarray*}
where
$$\varsigma(x,y):=\frac{|x|^2+|y|^2+\langle x,y\rangle}{3},\quad x,y\in\R^d.$$
\end{corollary}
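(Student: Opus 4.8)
The plan is to integrate the Li--Yau inequality \eqref{LY-dho-1} along a suitable path in space-time, exactly as in the classical Li--Yau argument. Fix $0<s<t<T$ and $x,y\in\R^d$. First I would consider the straight-line path $\gamma:[s,t]\to\R^d$ with $\gamma(s)=x$, $\gamma(t)=y$, parameterized affinely so that $|\dot\gamma(r)|=|x-y|/(t-s)$ for all $r$. Along this path define $\phi(r)=\log u(r,\gamma(r))$ and compute
\begin{equation*}
\frac{\d}{\d r}\phi(r)=\frac{\partial_r u(r,\gamma(r))}{u(r,\gamma(r))}+\Big\langle\nabla\log u(r,\cdot)(\gamma(r)),\dot\gamma(r)\Big\rangle.
\end{equation*}
Here one must be slightly careful: the inequality we have is in terms of the \emph{Dunkl} Laplacian $\Delta_\kappa$, not $\Delta$, so I would first convert \eqref{LY-dho-1} into a bound on $|\nabla u|^2/u^2-\partial_t u/u$. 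Since $u$ solves $\partial_t u=\Delta_\kappa u-|x|^2 u$, Theorem \ref{main-thm-2} (inequality \eqref{thm-2-1+} with $\beta(t,x)=(d+2\lambda_\kappa)\coth(2t)$ and $V(t,x)=|x|^2$) gives directly
\begin{equation*}
\frac{|\nabla u(r,\cdot)|^2(z)}{u(r,z)^2}-\frac{\partial_r u(r,z)}{u(r,z)}\leq (d+2\lambda_\kappa)\coth(2r)+|z|^2,\qquad (r,z)\in(0,T)\times\R^d.
\end{equation*}
This is the only place the Dunkl structure enters; from here on everything is a one-dimensional ODE estimate.

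Next I would bound $\d\phi/\d r$ from below. Using the Cauchy--Schwarz/Young inequality $\langle\nabla\log u,\dot\gamma\rangle\geq -|\nabla\log u|\,|\dot\gamma|\geq -|\nabla\log u|^2-\tfrac14|\dot\gamma|^2$ together with the displayed gradient estimate,
\begin{equation*}
\frac{\d}{\d r}\phi(r)\geq \frac{\partial_r u}{u}-|\nabla\log u|^2-\frac14|\dot\gamma(r)|^2
\geq -(d+2\lambda_\kappa)\coth(2r)-|\gamma(r)|^2-\frac{|x-y|^2}{4(t-s)^2}.
\end{equation*}
Integrating over $r\in[s,t]$ yields
\begin{equation*}
\log\frac{u(t,y)}{u(s,x)}=\phi(t)-\phi(s)\geq -(d+2\lambda_\kappa)\int_s^t\coth(2r)\,\d r-\int_s^t|\gamma(r)|^2\,\d r-\frac{|x-y|^2}{4(t-s)}.
\end{equation*}
Now $\int_s^t\coth(2r)\,\d r=\tfrac12\log\frac{\sinh(2t)}{\sinh(2s)}$, which produces the factor $(\sinh(2t)/\sinh(2s))^{(d+2\lambda_\kappa)/2}$. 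For the middle term, since $\gamma(r)=x+\frac{r-s}{t-s}(y-x)$ is an affine segment, $|\gamma(r)|^2$ is a quadratic polynomial in $r$ and a direct computation of $\frac{1}{t-s}\int_s^t|\gamma(r)|^2\,\d r$ gives exactly $\varsigma(x,y)=(|x|^2+|y|^2+\langle x,y\rangle)/3$; hence $\int_s^t|\gamma(r)|^2\,\d r=(t-s)\varsigma(x,y)$. Exponentiating gives the first inequality of the corollary. The second inequality then follows from the two elementary estimates $\coth(2r)\leq \frac{1}{2r}+1$ (so $\int_s^t\coth(2r)\,\d r\leq \tfrac12\log\frac{t}{s}+(t-s)$, giving the factor $(t/s)^{(d+2\lambda_\kappa)/2}e^{(d+2\lambda_\kappa)(t-s)}$) applied inside the already-exponentiated bound.

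The main (and only real) obstacle is the interchange step: making sure that the pointwise differential inequality \eqref{thm-2-1+} can legitimately be integrated along $\gamma$, i.e. that $r\mapsto\phi(r)=\log u(r,\gamma(r))$ is absolutely continuous with the chain rule valid a.e. This is guaranteed by the $C^{1,2}$ hypothesis on $u$ (so $u$ is jointly continuous, $\partial_r u$ and $\nabla u$ exist and are continuous in the relevant variables), and $u>0$ makes $\log u$ smooth in $u$; the nonlocal term in $\Delta_\kappa$ does not appear in this step because we have already passed to the bound \eqref{thm-2-1+} which only involves the ordinary gradient $\nabla$. A minor care point is that the endpoints $r=s$ and $r=t$ are interior to $(0,T)$ by assumption, so $\coth(2r)$ is integrable on $[s,t]$ and no boundary singularity arises. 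Everything else is the routine computation of the two integrals above.
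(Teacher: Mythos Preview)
Your proposal is correct and follows essentially the same route as the paper's proof: both invoke \eqref{thm-2-1+} with $\beta(t,x)=(d+2\lambda_\kappa)\coth(2t)$ and $V(t,x)=|x|^2$, integrate along the straight segment in space-time, and use Young's inequality to absorb the gradient term. The only cosmetic differences are that the paper parameterizes the path by $\tau\in[0,1]$ rather than $r\in[s,t]$, applies Cauchy--Schwarz and Young after integrating rather than pointwise, and deduces the second inequality from $\sinh(2t)/\sinh(2s)\le e^{2(t-s)}(t/s)$ instead of from $\coth(2r)\le \tfrac{1}{2r}+1$; these are equivalent.
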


We give a remark on Li--Yau inequalities and parabolic Harnack inequalities above. The proof will be postponed to Section 4.
\begin{remark}\label{rmk}
Suppose that the assumption of Corollary \ref{harnack} holds.
\begin{itemize}
\item[(1)] Let $\varrho_t(x)=h_t(x,0)$. Then  for every $t\in (0,T)$, $\R^d\ni x\mapsto \log(\frac{u_t}{\varrho_t})(x)$ is convex. Refer to \cite[Corollary 1.4]{HeTop2013} for the case of standard heat equation in $\R^d$.

\item[(2)] Let $\beta: (0,T)\times \R^d\rightarrow\R$ be jointly continuous. Then the Li--Yau inequality
\begin{equation}\label{Rem-LY}
\frac{|\nabla u(t,\cdot)|^2(x)}{u(t,x)^2}-\frac{\partial_t u(t,x)}{u(t,x)}\leq  \beta(t,x),\quad (t,x)\in (0,T)\times \R^d
\end{equation}
is equivalent to the parabolic Harnack inequality
\begin{equation}\label{Rem-Har}
u(s,x)\le u(t,y)\exp\left\{\frac{|x-y|^2}{4(t-s)}+(t-s)\int_0^1\beta(t+\tau(s-t),y+\tau(x-y))d\tau\right\},
\end{equation}
for any   $x,y\in \R^d,\, 0<s<t<T$.
\end{itemize}
\end{remark}

Now we turn to consider the case of heat equation associated with the Dunkl Laplacian $\Delta_\kappa$. 
Let $(p_t)_{t>0}$ be the heat kernel corresponding to $\Delta_\kappa$ with respect to $\mu_\kappa$, the so called Dunkl heat kernel  (see Section 3 for more details).

The next result is on the Li--Yau inequality for $(p_t)_{t>0}$, which can be proved by the same method as for Theorem \ref{main-thm-1}.
\begin{theorem}\label{dunkl-main-thm-1}
Assume that $G$ is isomorphic to $\mathbb{Z}_2^d$. Then for every $t>0$ and every $x,y\in\R^d$,
\begin{eqnarray}\label{LY-1}
-\Delta_\kappa\big(\log p_t(\cdot,y)\big)(x)\leq \frac{d+2\lambda_\kappa}{2t}.
\end{eqnarray}
\end{theorem}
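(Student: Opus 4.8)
The plan is to mirror the strategy used for Theorem \ref{main-thm-1}, since the statement says it can be proved ``by the same method.'' When $G\cong\mathbb{Z}_2^d$, the root system decomposes into the coordinate directions, the weight $w_\kappa(x)=\prod_{j=1}^d|x_j|^{\kappa_j}$ factorizes, and the Dunkl Laplacian $\Delta_\kappa$ splits as a sum of one-dimensional Dunkl operators acting in each variable separately. Consequently the Dunkl heat kernel $p_t(x,y)$ factorizes as a product $\prod_{j=1}^d p_t^{(j)}(x_j,y_j)$ of one-dimensional Dunkl heat kernels, and $-\Delta_\kappa(\log p_t(\cdot,y))(x)=\sum_{j=1}^d\big(-\Delta_{\kappa_j}^{(1)}(\log p_t^{(j)}(\cdot,y_j))(x_j)\big)$. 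So the whole problem reduces to the one-dimensional case, where the target bound becomes $\frac{1+2\kappa_j}{2t}$ in each coordinate, summing to $\frac{d+2\lambda_\kappa}{2t}$.

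In one dimension the Dunkl heat kernel with multiplicity $\kappa\ge 0$ is known explicitly in terms of a modified Bessel function: up to normalization,
\begin{equation*}
p_t(x,y)\propto t^{-(\kappa+\frac12)}\exp\!\left(-\frac{x^2+y^2}{4t}\right)E_\kappa\!\left(\frac{xy}{2t}\right),
\end{equation*}
where $E_\kappa(z)=c_\kappa z^{-(\kappa-\frac12)}I_{\kappa-\frac12}(z)$ is (a constant multiple of) the Dunkl kernel on $\R$, an entire even-plus-odd combination of Bessel functions. First I would substitute this formula, take logarithms, and compute $\Delta_\kappa(\log p_t(\cdot,y))(x)$ using the explicit expression $\Delta_\kappa f(x)=f''(x)+\frac{2\kappa}{x}f'(x)-\frac{\kappa}{x^2}(f(x)-f(-x))$. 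The Gaussian factor $-\frac{x^2}{4t}$ contributes a constant $-\frac{1}{2t}$ (plus the $2\kappa/x$ term gives $-\frac{\kappa}{t}$), and the $t^{-(\kappa+\frac12)}$ prefactor is $x$-independent so it drops out. What remains is to control the contribution of $\log E_\kappa(xy/2t)$; writing $\phi(s)=\log E_\kappa(s)$ and $s=xy/2t$, one needs a pointwise inequality of the form that the full Dunkl-Laplacian action on $\phi(xy/2t)$ is bounded above by $\frac{1+2\kappa}{2t}-\big(\text{what the Gaussian already gave}\big)$, i.e. a monotonicity/convexity estimate on the Bessel-type function $E_\kappa$. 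The key analytic input will be classical inequalities for ratios of modified Bessel functions (e.g. bounds on $I_\kappa'/I_\kappa$ of Turán type), exactly the tools that make the hyperbolic and fractional cases in \cite{YZ2020,WZ2021} work.

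The hard part will be this one-dimensional Bessel estimate: showing that the non-local reflection term $-\frac{\kappa}{x^2}\big(\phi(xy/2t)-\phi(-xy/2t)\big)$ together with the local second-derivative term combine to give precisely the sharp constant $\frac{1+2\kappa}{2t}$ with no leftover positive term (in contrast to the harmonic-oscillator case, where the $|x|^2$ potential forces the extra ``$+1$''). I would handle this by reducing to a clean statement about the function $g(s):=s\,\phi'(s)=s\,E_\kappa'(s)/E_\kappa(s)$ — checking that it is increasing and has the right behavior as $s\to 0$ and $s\to\pm\infty$ — and then verifying the resulting scalar inequality by hand, possibly splitting into the small-$s$ and large-$s$ regimes. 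Once the one-dimensional bound $-\Delta_{\kappa}^{(1)}(\log p_t(\cdot,y))(x)\le\frac{1+2\kappa}{2t}$ is in place, summing over coordinates finishes the proof; sharpness is inherited from the $\kappa=0$ case, where $p_t$ is the Gaussian kernel on $\R^d$ and equality in \eqref{LY86-3} is classical. If a fully self-contained Bessel argument proves cumbersome, an alternative is to differentiate under the integral sign in the Mehler-type/heat-semigroup representation of $p_t$ and use a maximum-principle or Bakry--Émery-style $\Gamma_2$ computation adapted to the Dunkl setting, but I expect the explicit special-function route to be the most direct here.
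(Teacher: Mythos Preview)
Your overall architecture is correct and matches the paper: factorize $p_t(x,y)=\prod_j p_t^{(j)}(x_j,y_j)$, split $\log p_t^{(j)}$ into the Gaussian piece (which contributes exactly $-(1+2\kappa_j)/(2t)$ to $\Delta_\kappa^{(1)}\log p_t^{(j)}$) and the Dunkl-kernel piece, and then show the latter gives a nonnegative contribution. Summing over $j$ yields $\frac{d+2\lambda_\kappa}{2t}$.

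Where you diverge from the paper is in the ``hard part.'' You propose to attack the Dunkl-kernel term via Tur\'an-type inequalities for $I_\lambda'/I_\lambda$ and monotonicity of $s\mapsto s\,\phi'(s)$, leaving the actual inequality unproved and relegating the integral representation to a fallback. The paper does the opposite: it uses the Schl\"afli integral representation
\[
E_\kappa(a)=\mathrm{const}\cdot\int_{-1}^{1}(1-s)^{\kappa-1}(1+s)^{\kappa}e^{as}\,\d s
\]
as the \emph{primary} tool. With this representation, \emph{both} pieces of the Bessel contribution become elementary Cauchy--Schwarz statements, exactly as in the proof of Theorem~\ref{main-thm-1}: (i) $(\log E_\kappa)''(a)\ge 0$ is the variance of $s$ under the probability measure $\propto (1-s)^{\kappa-1}(1+s)^\kappa e^{as}\,\d s$; and (ii) the reflection term reduces to the same claim $\phi(a):=2a(\log E_\kappa)'(a)+\log\frac{E_\kappa(-a)}{E_\kappa(a)}\ge 0$, whose proof (via $\phi(0)=0$ and the sign analysis of $\phi'$) carries over verbatim --- the only change from the harmonic-oscillator case is that $a=xy/(2t)$ replaces $a=xy/\sinh(2t)$, which is irrelevant to the inequality in $a$. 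So what you list as the ``alternative'' is precisely the paper's route, and it avoids any special-function asymptotics or case-splitting in $s$. Your Bessel-ratio plan may well succeed, but it is the harder path; the integral representation makes the proof a two-line transcription of Steps~1 and~2 from Theorem~\ref{main-thm-1}.

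One small inaccuracy: your displayed formula $E_\kappa(z)=c_\kappa z^{-(\kappa-\frac12)}I_{\kappa-\frac12}(z)$ describes only the even part of the one-dimensional Dunkl kernel; the full kernel also contains the odd term $z\cdot z^{-(\kappa+\frac12)}I_{\kappa+\frac12}(z)$ (cf.\ \eqref{hc-kernel-1}). This matters because the reflection term $\phi(a)-\phi(-a)$ vanishes identically for an even function, so the non-local piece of $\Delta_\kappa$ is only nontrivial once the odd part is included.
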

\begin{remark}
\eqref{LY-1} is sharp in the sense that, if $\kappa=0$, then $\Delta_\kappa=\Delta$, $\lambda_\kappa=0$, and $(p_t)_{t>0}$ reduce to the heat kernel corresponding to $\Delta$ on $\R^d$, i.e.,
\begin{eqnarray}\label{Gauss-kernel}
p_t(x,y)=\frac{1}{(4\pi t)^{d/2}}\exp\Big(-\frac{|x-y|^2}{4t}\Big),\quad x,y\in\R^d,\,t>0;
\end{eqnarray}
hence
$$-\Delta\big(\log p_t(\cdot,y)\big)(x)=\frac{d}{2t},$$
for every $t>0$ and every $x,y\in\R^d$.
\end{remark}

Combining Theorem \ref{main-thm-2} with vanishing potential $V$ and Theorem \ref{dunkl-main-thm-1} together, we have the following Li--Yau inequality for positive solutions to the Dunkl heat equation.  Then the corresponding parabolic Harnack inequality follows immediately  by the same approach as for Corollary \ref{harnack}.
\begin{corollary}\label{dunkli-LY-dho}
Suppose that $G$ is isomorphic to $\mathbb{Z}_2^d$. Then for every $C^{1,2}$ solution $u:[0,T)\times \R^d\rightarrow(0,\infty)$ to the Dunkl heat equation $\partial_tu=\Delta_\kappa u$, it holds that
\begin{equation*}
-\Delta_\kappa\big(\log u(t,\cdot)\big)(x)\leq \frac{d+2\lambda_\kappa}{2t},\quad (t,x)\in (0,T)\times\R^d,
\end{equation*}
and moreover,  for every $0<s<t<T$ and every $x,y\in\R^d$,
\begin{eqnarray*}
u(s,x)\leq u(t,y)\left(\frac{t}{s}\right)^{\lambda_\kappa+d/2}\exp\left\{\frac{|x-y|^2}{4(t-s)}\right\}.
\end{eqnarray*}
\end{corollary}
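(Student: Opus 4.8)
The plan is to derive the two assertions in turn: the pointwise Li--Yau bound directly from the equivalence in Theorem~\ref{main-thm-2} together with Theorem~\ref{dunkl-main-thm-1}, and the parabolic Harnack inequality from the gradient estimate \eqref{thm-2-1+} by integrating $\log u$ along a straight segment in space-time, exactly as in the proof of Corollary~\ref{harnack}.

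First I would apply Theorem~\ref{main-thm-2} with $T\in(0,\infty]$ as given and with vanishing potential $V\equiv 0$, so that $L_V=\Delta_\kappa$ and $\mathcal{S}(L_V)=\mathcal{S}(\Delta_\kappa)$. One checks the standing assumptions: $\mathcal{S}(\Delta_\kappa)\neq\emptyset$ (the constant function $1$ lies in it, as does $(t,x)\mapsto p_t(x,y)$ for each fixed $y$), and hypothesis~$(\mathbf H)$ holds with $h^V_t=p_t$ the Dunkl heat kernel, by the representation of positive solutions of $\partial_t u=\Delta_\kappa u$ via $p_t$ and the admissibility of differentiating under the integral in $t$. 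Granting this, Theorem~\ref{dunkl-main-thm-1} is precisely \eqref{thm-2-0} with $\beta(t,x)=(d+2\lambda_\kappa)/(2t)$, so the equivalence in Theorem~\ref{main-thm-2} upgrades it to \eqref{thm-2-1} for every $u\in\mathcal{S}(\Delta_\kappa)$; since a $C^{1,2}$ positive solution of $\partial_t u=\Delta_\kappa u$ is by definition an element of $\mathcal{S}(\Delta_\kappa)$, this is the first displayed inequality of the corollary.

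For the Harnack inequality, Theorem~\ref{main-thm-2} also gives \eqref{thm-2-1+}, which with $V\equiv0$ and $\beta(t,x)=(d+2\lambda_\kappa)/(2t)$ reads
\begin{equation*}
\frac{|\nabla u(t,\cdot)|^2(x)}{u(t,x)^2}-\frac{\partial_t u(t,x)}{u(t,x)}\leq \frac{d+2\lambda_\kappa}{2t},\qquad (t,x)\in(0,T)\times\R^d.
\end{equation*}
Fix $0<s<t<T$ and $x,y\in\R^d$; set $\ell(\tau)=t+\tau(s-t)$ and $\xi(\tau)=y+\tau(x-y)$ for $\tau\in[0,1]$, so $(\ell(0),\xi(0))=(t,y)$, $(\ell(1),\xi(1))=(s,x)$, and put $\phi(\tau)=\log u(\ell(\tau),\xi(\tau))$. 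Since $u>0$ is $C^1$ in $t$ and $C^2$ in $x$, the function $\phi$ is $C^1$ and, by the ordinary chain rule in the Euclidean variables,
\begin{equation*}
\phi'(\tau)=(t-s)\left(-\frac{\partial_t u}{u}\right)(\ell(\tau),\xi(\tau))+\frac{\langle\nabla u,x-y\rangle}{u}(\ell(\tau),\xi(\tau)).
\end{equation*}
Bounding the first summand by the displayed estimate (valid since $t-s>0$) and the second by $\langle\nabla u,x-y\rangle/u\le (t-s)|\nabla u|^2/u^2+|x-y|^2/(4(t-s))$, the terms $|\nabla u|^2/u^2$ cancel and
\begin{equation*}
\phi'(\tau)\leq (t-s)\,\frac{d+2\lambda_\kappa}{2\ell(\tau)}+\frac{|x-y|^2}{4(t-s)}.
\end{equation*}
Integrating over $[0,1]$ and using $\int_0^1 \ell(\tau)^{-1}\,\d\tau=(t-s)^{-1}\log(t/s)$ yields $\log\frac{u(s,x)}{u(t,y)}=\phi(1)-\phi(0)\leq \frac{d+2\lambda_\kappa}{2}\log\frac{t}{s}+\frac{|x-y|^2}{4(t-s)}$; exponentiating and recalling $\lambda_\kappa+d/2=(d+2\lambda_\kappa)/2$ gives the stated Harnack inequality. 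Equivalently, one may invoke the mechanism of Remark~\ref{rmk}(2) with the jointly continuous $\beta(t,x)=(d+2\lambda_\kappa)/(2t)$ and compute the same integral.

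The one genuinely nonroutine point is the verification of hypothesis~$(\mathbf H)$ for the Dunkl heat semigroup, namely that every $C^{1,2}$ positive solution of $\partial_t u=\Delta_\kappa u$ is recovered from its trace at time $0$ by integration against $p_t$, with $\partial_t$ passing under the integral; this rests on known quantitative properties of $p_t$ (positivity, Gaussian-type bounds and their time derivatives). Once $(\mathbf H)$ is in place, the reduction via Theorem~\ref{main-thm-2} and the space-time integration above are entirely standard, and the particularly clean exponent $(t/s)^{\lambda_\kappa+d/2}$ reflects the fact that $\beta(t,x)$ here is independent of $x$, which is why the extra term $\varsigma(x,y)$ appearing in Corollary~\ref{harnack} is absent.
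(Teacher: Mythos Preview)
Your proposal is correct and follows essentially the same route as the paper: the Li--Yau bound is obtained by combining Theorem~\ref{main-thm-2} (with $V\equiv 0$) and Theorem~\ref{dunkl-main-thm-1}, and the Harnack inequality by the same space-time integration as in the proof of Corollary~\ref{harnack}. The only cosmetic difference is that you apply Young's inequality pointwise in $\tau$ whereas the paper first applies Cauchy--Schwarz to the $\tau$-integral and then Young's inequality; the outcome is identical.
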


The proofs of Theorems \ref{main-thm-1} and \ref{dunkl-main-thm-1} are presented in Section 3, and proofs of Theorem \ref{main-thm-2}, Corollary \ref{harnack} and Remark \ref{rmk} are given in Section 4.

\section{Proofs of Theorems \ref{main-thm-1} and \ref{dunkl-main-thm-1}}\hskip\parindent
In this section, we assume that the reflection group $G$ is isomorphic to $\mathbb{Z}_2^d$. 
In this case, the root system $\mathcal{R}=\{\pm\sqrt{2}\e_j: j=1,\cdots,d\}$ and the subsystem $\mathcal{R}_+=\{\sqrt{2}\e_j: j=1,\cdots,d\}$. The reflection group $G$ is generated by $\{\sigma_j: j=1,\cdots,d\}$, where
$$\sigma_i(x_1,\cdots,x_i,\cdots,x_d)=(x_1,\cdots,x_{i-1},-x_i,x_{i+1},\cdots,x_d),\quad i=1,\cdots,d,$$
for each $x\in\R^d$ with $x=(x_1,\cdots,x_d)\in\R^d$. Since the multiplicity function $\kappa$ is nonnegative and $\mathbb{Z}_2^d$-invariant, for each $j=1,\cdots,d$, we may take $\kappa_{\sigma_j}=\kappa_j\geq0$. Then, for every $x=(x_1,\cdots,x_d)\in\R^d$,
$$\D_j f(x)=\partial_j f(x)+\frac{\kappa_j}{x_j}\big[f(x)-f(\sigma_j x)\big],\quad f\in C^1(\R^d),\,j=1,\cdots,d,$$
and
\begin{eqnarray*}
\Delta_\kappa f(x)&=&\Delta f(x)+\sum_{j=1}^d \frac{\kappa_j}{x_j^2}\big[2x_j\partial_j f(x)-f(x)+f(\sigma_j x)\big]\cr
&=&\sum_{j=1}^d\left(\partial_{jj}^2 f(x) + \frac{\kappa_j}{x_j^2}\big[2x_j\partial_j f(x)-f(x)+f(\sigma_j x)\big]\right),\quad f\in C^2(\R^d),
\end{eqnarray*}
The weight function can be written as
$$w_\kappa(x)=\prod_{j=1}^d|x_j|^{\kappa_j},\quad x\in\R^d,$$
which is clearly homogeneous of degree $\lambda_\kappa=\sum_{j=1}^d \kappa_j$.

It has been shown in \cite[page 544]{NS2009} (see also \cite{Rosler1998}) that the heat kernel $h_t(x,y)$ corresponding to the Dunkl harmonic oscillator $L_\kappa=\Delta_\kappa-|x|^2$ with respect to $\mu_\kappa$ has the following expression, i.e., for every $t>0$ and every $x,y\in\R^d$ with $x=(x_1,\cdots,x_d),y=(y_1,\cdots,y_d)$,
\begin{equation}\begin{split}\label{hc-kernel-1}
h_t(x,y)&=\frac{1}{[2\sinh(2t)]^d}\exp\left(-\frac{\coth(2t)}{2}\left[|x|^2+|y|^2\right]\right)\\
&\quad\times\prod_{j=1}^d
\left[\frac{I_{\kappa_j-1/2}(\frac{x_jy_j}{\sinh(2t)})}{(x_jy_j)^{\kappa_j-1/2}}
+x_jy_j\frac{I_{\kappa_j+1/2}(\frac{x_jy_j}{\sinh(2t)})}{(x_jy_j)^{\kappa_j+1/2}}\right],
\end{split}\end{equation}
where for every $\lambda>-1$, $I_\lambda$ is the modified Bessel function of the first kind with order $\lambda$ defined as
$$z^{-\lambda}I_\lambda(z)=\sum_{j=0}^\infty \frac{1}{j!\Gamma(\lambda+j+1)}\Big(\frac{z}{2}\Big)^{2j},\quad z\in\R,$$
and $\Gamma(\cdot)$ stands for the Gamma function. Note that $z\mapsto z^{-\lambda}I_\lambda(\cdot)$ is smooth on $\R$ and $I_\lambda(z)>0$ for every  $z\in(0,\infty)$; see e.g. \cite[Chapter 5]{Lebedev72}.

Let $\delta_\cdot$ be the Dirac measure. Due to Schl\"{a}fli's integral representation of the modified Bessel function $I_\lambda$ (see e.g. \cite[page 549]{NS2009}), we see that
$$I_\lambda(z)=z^\lambda\int_{-1}^1e^{-zs}\,\nu_\lambda(\d s),\quad z>0,\,\lambda\geq -\frac{1}{2},$$
where
$$\nu_\lambda(\d s):=\frac{(1-s^2)^{\lambda-1/2}}{\sqrt{\pi}2^\lambda\Gamma(\lambda+1/2)}\,\d s,\quad s\in(-1,1),\,\lambda>-\frac{1}{2},$$
and
$$\nu_{-1/2}(\d s):=\frac{1}{\sqrt{2\pi}}(\delta_{-1}+\delta_{1}),$$
Then, by \cite[Example 2.1, page 107]{Rosler1998}, we observe that \eqref{hc-kernel-1} can be rewritten  as
\begin{equation}\begin{split}\label{hc-kernel-2}
h_t(x,y)&=\prod_{j=1}^d \frac{c_{\kappa_j}}{\left[\sinh(2t)\right]^{\kappa_j+1/2}}\exp\left(-\frac{1}{2}\coth(2t)(x_j^2+y_j^2)\right)\\
&\quad\times\int_{-1}^1(1-s)^{\kappa_j-1}(1+s)^{\kappa_j}\exp\Big(\frac{x_jy_js}{\sinh(2t)}\Big)\,\d s\\
&=:\prod_{j=1}^d h_t^{(j)}(x_j,y_j),
\end{split}\end{equation}
for every $t>0$ and every $x,y\in\R^d$ with $x=(x_1,\cdots,x_d),y=(y_1,\cdots,y_d)$, where $c_{\kappa_j}^{-1}=2^{\kappa_j+1/2} \sqrt{\pi} \Gamma(\kappa_j+1/2)$.

Now we are ready to present the proof.
\begin{proof}[Proof of Theorem \ref{main-thm-1}] Let $t>0$ and $x,y\in\R^d$ with $x=(x_1,\cdots,x_d), \, y=(y_1,\cdots,y_d)$. By \eqref{hc-kernel-2}, it is clear that
\begin{equation}\begin{split}\label{thm-1-proof-1}
&\Delta_\kappa\big(\log h_t(\cdot,y)\big)(x)\\
&=\sum_{j=1}^d \Big(\frac{\kappa_j}{x_j^2}\big[2x_j\partial_{x_j}\log h_t^{(j)}(x_j,y_j)
-\log h_t^{(j)}(x_j,y_j)+ \log h_t^{(j)}(-x_j,y_j)\big]\\
&\quad+\partial_{x_jx_j}^2 \log h_t^{(j)}(x_j,y_j)\Big).
\end{split}\end{equation}
Hence, it suffices to estimate the terms in the parentheses of \eqref{thm-1-proof-1}, i.e.,
$$\partial_{x_jx_j}^2 \log h_t^{(j)}(x_j,y_j)+\frac{\kappa_j}{x_j^2}\big[2x_j\partial_{x_j}\log h_t^{(j)}(x_j,y_j)
-\log h_t^{(j)}(x_j,y_j)+ \log h_t^{(j)}(-x_j,y_j)\big].$$
For notational convenience, we ignore the superscript and the subscript $j$ in the above terms. Then we shall estimate
\begin{eqnarray*}
\partial_{uu}^2 \log h_t(u,v)&+&\frac{\kappa}{u^2}\big[2u\partial_{u}\log h_t(u,v)-\log h_t(u,v)+ \log h_t(-u,v)\big],
\end{eqnarray*}
for every $t>0$ and $u,v\in\R$, where $\kappa$ is a nonnegative constant,
\begin{equation*}\begin{split}\label{hc-kernel-1D}
h_t(u,v)&=\frac{c_{\kappa}}{\left[\sinh(2t)\right]^{\kappa+1/2}}\exp\left(-\frac{1}{2}\coth(2t)(u^2+v^2)\right)\\
&\quad\times\int_{-1}^1(1-s)^{\kappa-1}(1+s)^{\kappa}\exp\Big(\frac{uvs}{\sinh(2t)}\Big)\,\d s,
\end{split}\end{equation*}
  and $c_{\kappa}=\frac{1}{2^{\kappa+1/2}\Gamma(\kappa+1/2)\sqrt{\pi}}$.

Let $g(s)=(1-s)^{\kappa-1}(1+s)^{\kappa}$ and
$$E(u,v)=\int_{-1}^1g(s)\exp\Big(\frac{uvs}{\sinh(2t)}\Big)\,\d s.$$
Then
\begin{equation}\begin{split}\label{log-kernel}
\log h_t(u,v)=\log c_\kappa-\left(\kappa+\frac{1}{2}\right)\log\sinh(2t)-\frac{1}{2}\coth(2t)\left(u^2+v^2\right)+\log E(u,v),
\end{split}\end{equation}
for every $t>0$ and every $u,v\in\R$.

\textbf{Step 1}. Estimate $\partial_{uu}^2 \log h_t(u,v)$ from below. From \eqref{log-kernel}, we immediately have
\begin{equation*}\begin{split}
\partial_{uu}^2 \log h_t(u,v)=-\coth(2t)+\frac{\partial_{uu}^2 E(u,v)}{E(u,v)}-\frac{\big(\partial_{u}E(u,v)\big)^2}{E(u,v)^2}
\end{split}\end{equation*}
Since
\begin{eqnarray*}
\partial_u E(u,v)&=&\frac{v}{\sinh(2t)}\int_{-1}^1s g(s)\exp\Big(\frac{uvs}{\sinh(2t)}\Big)\,\d s,\\
\partial_{uu}^2 E(u,v)&=&\frac{v^2}{[\sinh(2t)]^2}\int_{-1}^1s^2g(s)\exp\Big(\frac{uvs}{\sinh(2t)}\Big)\,\d s,
\end{eqnarray*}
we obtain
\begin{equation*}\begin{split}
&\partial_{uu}^2 E(u,v)E(u,v)-\big(\partial_{u}E(u,v)\big)^2\\
&=\frac{v^2}{[\sinh(2t)]^2}\left(\int_{-1}^1s^2g(s)\exp\Big(\frac{uvs}{\sinh(2t)}\Big)\,\d s\right)\left(\int_{-1}^1g(s)\exp\Big(\frac{uvs}{\sinh(2t)}\Big)\,\d s\right)\\
&\quad-\left(\frac{v}{[\sinh(2t)]}\int_{-1}^1sg(s)\exp\Big(\frac{uvs}{\sinh(2t)}\Big)\,\d s\right)^2\\
&\geq0,
\end{split}\end{equation*}
by the Cauchy--Schwarz inequality. Thus
\begin{eqnarray}\label{proof-thm-1-1}
\partial_{uu}^2\log h_t(u,v)\geq-\coth(2t).
\end{eqnarray}

\textbf{Step 2}. Estimate ${\rm I}$ from below, where
$${\rm I}:=\frac{\kappa}{u^2}\big[2u\partial_u\log h_t(u,v)+\log h_t(-u,v)-\log h_t(u,v)\big].$$
Then, letting $a=uv/\sinh(2t)$, we have
\begin{eqnarray}\label{eq-J}
{\rm I}&=&\frac{\kappa}{u^2}\left[-2\coth(2t)u^2+ 2u \frac{\partial_u E(u,v)}{  E(u,v)} + \log\frac{ E(-u,v)}{  E(u,v)}\right]\cr
&=&\frac{\kappa}{u^2}\left[-2\coth(2t)u^2+ 2a \frac{\int_{-1}^1 sg(s) e^{as}\,\d s}{\int_{-1}^1 g(s) e^{as}\,\d s  } + \log\frac{\int_{-1}^1 g(s) e^{-as}\,\d s}{  \int_{-1}^1 g(s) e^{as}\,\d s}\right].
\end{eqnarray}

Set
$$\phi(a)=2a \frac{\int_{-1}^1 sg(s) e^{as}\,\d s}{\int_{-1}^1 g(s) e^{as}\,\d s  } + \log\frac{\int_{-1}^1 g(s) e^{-as}\,\d s}{  \int_{-1}^1 g(s) e^{as}\,\d s},\quad a\in\R.$$
We \textbf{claim} that $$\phi(a)\geq 0, \quad a\in\R.$$

Now we begin to prove the claim. Indeed, $\phi(0)=0$, and
\begin{eqnarray*}
\phi'(a)&=&2a \left[ \frac{\int_{-1}^1 s^2g(s) e^{as}\,\d s}{\int_{-1}^1 g(s) e^{as}\,\d s} -\frac{\big(\int_{-1}^1 sg(s) e^{as}\,\d s\big)^2}{\big(\int_{-1}^1 g(s) e^{as}\,\d s\big)^2} \right]\\
&&+\frac{\int_{-1}^1 sg(s) e^{as}\,\d s}{\int_{-1}^1 g(s) e^{as}\,\d s}-\frac{\int_{-1}^1 sg(s) e^{-as}\,\d s}{\int_{-1}^1 g(s) e^{-as}\,\d s}.
\end{eqnarray*}
Applying the Cauchy--Schwarz inequality, we have
\begin{eqnarray}\label{proof-J}
\frac{\int_{-1}^1 s^2g(s) e^{as}\,\d s}{\int_{-1}^1 g(s) e^{as}\,\d s} -\frac{\big(\int_{-1}^1 sg(s) e^{as}\,\d s\big)^2}{\big(\int_{-1}^1 g(s) e^{as}\,\d s\big)^2}\geq0.
\end{eqnarray}
For every $a\in\R$, set
\begin{equation*}\begin{split}
\psi(a)&:=\Big(\int_{-1}^1 sg(s) e^{as}\,\d s\Big)\Big(\int_{-1}^1 g(s) e^{-as}\,\d s\Big) \\
 &\quad-  \Big(\int_{-1}^1 g(s) e^{as}\,\d s\Big)
\Big(\int_{-1}^1 sg(s) e^{-as}\,\d s\Big).
\end{split}\end{equation*}
Then $\psi(0)=0$, and
\begin{eqnarray*}
\psi'(a)&=&\Big(\int_{-1}^1 s^2g(s) e^{as}\,\d s\Big)\Big(\int_{-1}^1 g(s) e^{-as}\,\d s\Big)\\
 &&-2\Big(\int_{-1}^1 sg(s) e^{as}\,\d s\Big)\Big(\int_{-1}^1 sg(s) e^{-as}\,\d s\Big)\\
&&+\Big(\int_{-1}^1 g(s) e^{as}\,\d s\Big)\Big(\int_{-1}^1 s^2g(s) e^{-as}\,\d s\Big)\\
&\geq&AD-2\sqrt{ABCD}+BC=\big(\sqrt{AD}-\sqrt{BC} \big)^2\\
&\geq&0,
\end{eqnarray*}
where we applied the Cauchy--Schwarz inequality twice and set
\begin{eqnarray*}
A&=&\int_{-1}^1 g(s) e^{as}\,\d s,\quad\,\,\,\,\,\, B=\int_{-1}^1 g(s) e^{-as}\,\d s,\\
C&=&\int_{-1}^1 s^2g(s) e^{as}\,\d s,\quad D=\int_{-1}^1 s^2g(s) e^{-as}\,\d s.
\end{eqnarray*}
Hence $a\mapsto \psi(a)$ is increasing in $\R$.

(1) Suppose $a\geq0$. Then, by \eqref{proof-J},
\begin{eqnarray*}
\phi'(a)&\geq&\frac{\int_{-1}^1 sg(s) e^{as}\,\d s}{\int_{-1}^1 g(s) e^{as}\,\d s} -  \frac{\int_{-1}^1 sg(s) e^{-as}\,\d s}{\int_{-1}^1 g(s) e^{-as}\,\d s}\\
&=&\frac{\psi(a)}{\big(\int_{-1}^1 g(s) e^{as}\,\d s\big)\big(\int_{-1}^1 g(s) e^{-as}\,\d s\big)}.
\end{eqnarray*}
Since $a\mapsto \psi(a)$ is increasing in $[0,\infty)$, we have $\psi(a)\geq \psi(0)=0$, $a\geq0$. Hence $\phi'(a)\geq0$, $a\geq0$.

(2) Suppose $a\leq0$. Then, by \eqref{proof-J},
\begin{eqnarray*}
\phi'(a)&\leq&\frac{\int_{-1}^1 sg(s) e^{as}\,\d s}{\int_{-1}^1 g(s) e^{as}\,\d s} -  \frac{\int_{-1}^1 sg(s) e^{-as}\,\d s}{\int_{-1}^1 g(s) e^{-as}\,\d s}\\
&=&\frac{\psi(a)}{\big(\int_{-1}^1 g(s) e^{as}\,\d s\big)\big(\int_{-1}^1 g(s) e^{-as}\,\d s\big)}.
\end{eqnarray*}
Since $a\mapsto \psi(a)$ is increasing in $(-\infty,0]$, we have $\psi(a)\leq \psi(0)=0$, $a\leq0$. Hence $\psi'(a)\leq0$, $a\leq0$.

Combining (1) and (2), we see that $\phi'(a)\geq0$ for every $a\geq0$, and $\phi'(a)\leq0$ for every $a\leq0$. Thus, $\phi(a)\geq \phi(0)=0$ for every $a\in\R$, which completes the proof of the claim.

Thus, by the claim and \eqref{eq-J}, we have
\begin{eqnarray}\label{proof-thm-1-2}
{\rm I}\geq -2\kappa\coth(2t).
\end{eqnarray}

Therefore, gathering \eqref{thm-1-proof-1}, \eqref{proof-thm-1-1} and \eqref{proof-thm-1-2} together, we obtain
\begin{eqnarray*}
-\Delta_\kappa\big(\log h_t(\cdot,y)\big)(x)&\leq& (d+2\lambda_\kappa)\coth(2t)\\
&\leq& (d+2\lambda_\kappa)\left(1+\frac{1}{2t}\right),\quad t>0,\,x,y\in\R^d,
\end{eqnarray*}
where in the last inequality we used the elementary fact that $e^s\geq 1+s$ for every $s\geq0$.

We complete the proof of \eqref{LY-1}.
\end{proof}

In order to apply the above method to prove Theorem \ref{dunkl-main-thm-1}, we only need to observe the following fact (see e.g. \cite{Dunkl1992,Rosler1998}).

For every $t>0$ and every $x,y\in\R^d$ with $x=(x_1,\cdots,x_d)$ and $y=(y_1,\cdots,y_d)$,
\begin{eqnarray*}\label{dunkl-product-kernel}
p_t(x,y)=\prod_{i=1}^dp^{(i)}_t(x_i,y_i),
\end{eqnarray*}
where for each $i=1,\cdots,d$, $\kappa_i$ is a nonnegative constant, and
\begin{eqnarray*}\label{dunkl-1d-kernel}
p^{(i)}_t(u,v)&:=&\frac{1}{\Gamma(\kappa_i+1/2)(2t)^{\kappa_i+1/2}}\exp\left(-\frac{u^2+v^2}{4t}\right)
E_{\kappa_i}\Big(\frac{u}{\sqrt{2t}},\frac{v}{\sqrt{2t}}\Big),\\
E_{\kappa_i}(u,v)&:=&\frac{\Gamma(\kappa_i+1/2)}{\sqrt{\pi} \Gamma(\kappa_i)}\int_{-1}^1 (1-s)^{\kappa_i-1}(1+s)^{\kappa_i} e^{suv}\,\d s,
\end{eqnarray*}
for every $u,v\in\R$ and $t>0$. In the present $\mathbb{Z}_2^d$ setting, also refer to \cite{ABDH2015} for more details on the Dunkl heat kernel and its estimates and see \cite{DX2015} for the development of harmonic analysis in the Dunkl setting.

\section{Proofs of Theorem \ref{main-thm-2}, Corollary \ref{harnack} and Remark \ref{rmk}}\hskip\parindent
For $\psi\in C^1(\R)$ and $a,b\in\R$, let
$$\pi_\psi(a,b):=\psi(a)-\psi(b)-\psi'(b)(a-b).$$
In order to prove Theorem \ref{main-thm-2}, we need the following lemma which is motivated by \cite[Lemma 4.4]{GLR2018} in the particular case when $\psi(t)=|t|^p$ with $p>1$ for any $t\in\R$, and see also the recent \cite[Lemma 2.1]{WZ2021} for the general pure jump case.
\begin{lemma}\label{chain-rule}
Let $I\subseteq\R$ be an interval, $\psi\in C^2(I)$ and $f\in C^2(\R^d,I)$. Then,
\begin{eqnarray*}
\Delta_\kappa\psi(f)=\psi'(f)\Delta_\kappa f+\psi''(f)|\nabla f|^2+\Pi_\psi(f),
\end{eqnarray*}
where
$$\Pi_\psi(f)(x):=2\sum_{\alpha\in\mathcal{R}_+}\kappa_\alpha\frac{\pi_\psi\big(f(\sigma_\alpha x),f(x)\big)}{\langle\alpha,x\rangle^2},\quad x\in\R^d.$$
In addition, if $f$ is $G$-invariant, i.e., $f(g x)=f(x)$ for every $g\in G$ and every $x\in\R^d$, then
$$\Delta_\kappa\psi(f)=\psi'(f)\Delta f+\psi''(f)|\nabla f|^2.$$
\end{lemma}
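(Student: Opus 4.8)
The plan is to expand $\Delta_\kappa \psi(f)$ directly from the explicit formula for the Dunkl Laplacian given in the excerpt, namely
$$\Delta_\kappa h(x)=\Delta h(x)+2\sum_{\alpha\in\mathcal{R}_+}\kappa_\alpha\left[\frac{\langle\alpha,\nabla h\rangle}{\langle\alpha,x\rangle}-\frac{h(x)-h(\sigma_\alpha x)}{\langle\alpha,x\rangle^2}\right],$$
applied with $h=\psi(f)$. First I would handle the local (second-order differential) part: by the ordinary chain rule in $\R^d$, $\Delta(\psi(f))=\psi'(f)\Delta f+\psi''(f)|\nabla f|^2$ and $\langle\alpha,\nabla(\psi(f))\rangle=\psi'(f)\langle\alpha,\nabla f\rangle$. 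Substituting these gives
$$\Delta_\kappa\psi(f)(x)=\psi'(f)\Delta f+\psi''(f)|\nabla f|^2+2\sum_{\alpha\in\mathcal{R}_+}\kappa_\alpha\psi'(f(x))\frac{\langle\alpha,\nabla f(x)\rangle}{\langle\alpha,x\rangle}-2\sum_{\alpha\in\mathcal{R}_+}\kappa_\alpha\frac{\psi(f(x))-\psi(f(\sigma_\alpha x))}{\langle\alpha,x\rangle^2}.$$

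Next I would reorganize so that the expression for $\Delta_\kappa f$ itself appears. Multiply and divide to write $\psi'(f(x))$ times the first Dunkl sum for $f$: that is, rewrite the above as $\psi'(f)\big[\Delta f+2\sum_\alpha\kappa_\alpha(\frac{\langle\alpha,\nabla f\rangle}{\langle\alpha,x\rangle}-\frac{f(x)-f(\sigma_\alpha x)}{\langle\alpha,x\rangle^2})\big]+\psi''(f)|\nabla f|^2$ plus a remainder, where the remainder collects the pieces $2\sum_\alpha\kappa_\alpha[\psi'(f(x))\frac{f(x)-f(\sigma_\alpha x)}{\langle\alpha,x\rangle^2}-\frac{\psi(f(x))-\psi(f(\sigma_\alpha x))}{\langle\alpha,x\rangle^2}]$. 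The bracket multiplying $\psi'(f)$ is exactly $\Delta_\kappa f$. The remainder, after factoring out $1/\langle\alpha,x\rangle^2$, is $2\sum_\alpha\kappa_\alpha\langle\alpha,x\rangle^{-2}\big[\psi(f(\sigma_\alpha x))-\psi(f(x))-\psi'(f(x))(f(\sigma_\alpha x)-f(x))\big]$, which is precisely $\sum_\alpha 2\kappa_\alpha\langle\alpha,x\rangle^{-2}\pi_\psi(f(\sigma_\alpha x),f(x))=\Pi_\psi(f)(x)$ by the definition of $\pi_\psi$. This yields the claimed identity $\Delta_\kappa\psi(f)=\psi'(f)\Delta_\kappa f+\psi''(f)|\nabla f|^2+\Pi_\psi(f)$.

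For the $G$-invariant case, I would simply note that if $f(\sigma_\alpha x)=f(x)$ for every $\alpha$, then $\pi_\psi(f(\sigma_\alpha x),f(x))=\psi(f(x))-\psi(f(x))-\psi'(f(x))\cdot 0=0$, so $\Pi_\psi(f)\equiv0$; moreover $f(\sigma_\alpha x)=f(x)$ also forces the non-local part of $\Delta_\kappa f$ to vanish (the jump term is zero and $\langle\alpha,\nabla f(x)\rangle=0$ since $f$ is even in the $\alpha$-direction), so $\Delta_\kappa f=\Delta f$, giving $\Delta_\kappa\psi(f)=\psi'(f)\Delta f+\psi''(f)|\nabla f|^2$. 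A small technical point worth checking is that all terms are genuinely finite and the rearrangement is legitimate near the hyperplanes $\langle\alpha,x\rangle=0$: each difference quotient $\langle\alpha,x\rangle^{-2}\pi_\psi(f(\sigma_\alpha x),f(x))$ and $\langle\alpha,x\rangle^{-2}(f(x)-f(\sigma_\alpha x))$ extends smoothly because $f\in C^2$ and $\sigma_\alpha x\to x$ as $\langle\alpha,x\rangle\to0$, so no singularity arises — this is the only place requiring a little care, and it is essentially the same observation already used implicitly in writing down the formula for $\Delta_\kappa$ on $C^2$ functions. The main ``obstacle'' is thus purely bookkeeping: correctly matching the algebraic pieces so that $\Delta_\kappa f$ and $\Pi_\psi(f)$ emerge, rather than any genuine analytic difficulty.
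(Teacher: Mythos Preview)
Your proof of the main identity is correct and follows exactly the same route as the paper: expand $\Delta_\kappa\psi(f)$ via the explicit formula, apply the classical chain rule to the local part, then add and subtract $\psi'(f(x))\dfrac{f(\sigma_\alpha x)-f(x)}{\langle\alpha,x\rangle^2}$ to reassemble $\psi'(f)\Delta_\kappa f$ and leave $\Pi_\psi(f)$ as remainder. The paper does precisely this, in one displayed chain of equalities, and likewise treats the $G$-invariant case as immediate from the first assertion.

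There is, however, a slip in your argument for the $G$-invariant part. You claim that $G$-invariance forces $\langle\alpha,\nabla f(x)\rangle=0$ for every $x$, hence $\Delta_\kappa f=\Delta f$. This is false: take $d=1$, $\alpha=\sqrt{2}$, $f(x)=x^2$; then $f$ is $G$-invariant but $\langle\alpha,\nabla f(x)\rangle=2\sqrt{2}\,x\neq0$, and indeed $\Delta_\kappa f=2+4\kappa\neq2=\Delta f$. What $G$-invariance \emph{does} give you immediately is $\Pi_\psi(f)\equiv0$ (since $f(\sigma_\alpha x)=f(x)$), so the first identity reduces to $\Delta_\kappa\psi(f)=\psi'(f)\Delta_\kappa f+\psi''(f)|\nabla f|^2$. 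The appearance of $\Delta$ rather than $\Delta_\kappa$ on the right-hand side in the stated second assertion looks like a typo in the paper; your attempt to justify it literally led you to an incorrect claim.
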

\begin{proof}
We only need to prove the first assertion, which is derived by a direct calculation. For every $x\in\R^d$,
\begin{eqnarray*}
\Delta_\kappa\psi(f)(x)&=&\Delta\psi(f)(x)+2\sum_{\alpha\in\mathcal{R}_+}\kappa_\alpha
\bigg(\frac{\langle\alpha,\nabla\psi\big(f(x)\big)\rangle}{\langle\alpha,x\rangle}+\frac{\psi\big(f(\sigma_\alpha x)\big)-\psi\big(f(x)\big)}{\langle\alpha,x\rangle^2}\bigg)\\
&=&\psi'\big(f(x)\big)\Delta f(x)+\psi''\big(f(x)\big)|\nabla f|^2(x)\\
&&+2\sum_{\alpha\in\mathcal{R}_+}\kappa_\alpha
\bigg(\psi'\big(f(x)\big)\frac{\langle\alpha,\nabla f(x) \rangle}{\langle\alpha,x\rangle}
+\psi'\big(f(x)\big)\frac{f(\sigma_\alpha x)-f(x)}{\langle\alpha,x\rangle^2}\\
&&+\frac{\psi\big(f(\sigma_\alpha x)\big)-\psi\big(f(x)\big)-\psi'\big(f(x)\big)[f(\sigma_\alpha x)-f(x)]}{\langle\alpha,x\rangle^2}
\bigg)\\
&=&\psi'\big(f(x)\big)\Delta_\kappa f(x)+\psi''\big(f(x)\big)|\nabla f|^2(x)+2\sum_{\alpha\in\mathcal{R}_+}\kappa_\alpha\frac{\pi_\psi\big(f(\sigma_\alpha x),f(x)\big)}{\langle\alpha,x\rangle^2},
\end{eqnarray*}
where we applied the chain rule for the Laplacian $\Delta$ in the second equality.
\end{proof}

Now we start to prove Theorem \ref{main-thm-2}.
\begin{proof}[Proof of Theorem \ref{main-thm-2}]
From the assumption,  it is clear that \eqref{thm-2-1} implies \eqref{thm-2-0}.

Let $u\in\mathcal{S}(L_V)$ and denote also $u_t=u(t,\cdot)$ and $V_t=V(t,\cdot)$. Applying Lemma \ref{chain-rule} with $\psi(t)=\log t$ for every $t>0$, we obtain
\begin{eqnarray}\label{main-thm-2-1}
L_V\log u_t&=&\frac{\Delta_\kappa u_t}{u_t}-|\nabla\log u_t|^2+\Pi_{\log}(u_t)-V_t\log u_t.
\end{eqnarray}
By \eqref{main-thm-2-1},
\begin{eqnarray*}
\frac{|\nabla u(t,\cdot)|^2(x)}{u(t,x)^2}-\frac{\partial_t u(t,x)}{u(t,x)}&=&\Pi_{\log}\big(u(t,\cdot)\big)(x)-\Delta_\kappa\big(\log u(t,\cdot)\big)(x)+V(t,x)\\
&=&2\sum_{\alpha\in\mathcal{R}_+}\frac{\kappa_\alpha}{\langle\alpha,x\rangle^2}\left(\log\frac{u(t,\sigma_\alpha x)}{u(t,x)} - \frac{u(t,\sigma_\alpha x)}{u(t,x)} +1\right)\\
&&-\Delta_\kappa\big(\log u(t,\cdot)\big)(x)+V(t,x).
\end{eqnarray*}
Let $\xi(t):=\log t -t+1$, $t>0$. It is easy to see that $\xi(t)\leq \xi(1)=0$ for all $t>0$, which implies that $\Pi_{\log}\big(u(t,\cdot)\big)(x)\leq 0$ for all $t>0$ and $x\in\R^d$. Combing this with \eqref{thm-2-1} together, we prove \eqref{thm-2-1+}.

So, it remains to prove that \eqref{thm-2-0} implies \eqref{thm-2-1}. Let $f=u(0,\cdot)$. Since $u(t,x)=\int_{\R^d}f(y)h^V_t(x,y)\,\mu_\kappa(\d y)$, by \eqref{main-thm-2-1} and the assumption (\textbf{H}), we have
\begin{equation}\begin{split}\label{proof-thm-2-1}
&\partial_t u(t,x)+\beta(t,x)u(t,x)\\
&=\int_{\R^d}f(y)\partial_t h^V_t(x,y)\,\mu_\kappa(\d y) + \beta(t,x)\int_{\R^d}f(y)h^V_t(x,y)\,\mu_\kappa(\d y)\\
&=\int_{\R^d}f(y)\big[L_V h^V_t(\cdot,y)(x)+\beta(t,x) h^V_t(x,y)\big]\,\mu_\kappa(\d y)\\
&=\int_{\R^d}f(y)\big[h^V_t(x,y)\Delta_\kappa \log h^V_t(x,y)+h^V_t(x,y)|\nabla\log h^V_t(\cdot,y)(x)|^2\\
&\quad-V(t,x)h_t^V(x,y)-h^V_t(x,y)\Pi_{\log}\big(h^V_t(\cdot,y)\big)(y)+\beta(t,x)h^V_t(x,y)\big]\,\mu_\kappa(\d y)\\
&\geq\int_{\R^d}f(y)\big[h^V_t(x,y)|\nabla\log h^V_t(\cdot,y)(x)|^2-h^V_t(x,y)\Pi_{\log}\big(h^V_t(\cdot,y)\big)(x)\\
&\quad-V(t,x)h_t^V(x,y)\big]\,\mu_\kappa(\d y),
\end{split}\end{equation}
where \eqref{thm-2-0} was applied in the last inequality.

It is easy to see that, by the Cauchy--Schwarz inequality,
\begin{eqnarray*}
|\nabla u(t,\cdot)(x)|^2&\leq&\left(\int_{\R^d}|\nabla h^V_t(\cdot,y)(x)|f(y)\,\mu_\kappa(\d y)\right)^2\\
&\leq&\left(\int_{\R^d}\frac{|\nabla h^V_t(\cdot,y)(x)|^2}{h^V_t(x,y)}f(y)\,\mu_\kappa(\d y)\right)\left(\int_{\R^d}h^V_t(x,y)f(y)\,\mu_\kappa(\d y)\right),
\end{eqnarray*}
which implies that
\begin{eqnarray}\label{proof-thm-2-2}
\int_{\R^d}f(y) |\nabla\log h^V_t(\cdot,y)(x)|^2h^V_t(x,y)\,\mu_\kappa(\d y) \geq u(t,x) |\nabla \log u(t,\cdot)(x)|^2.
\end{eqnarray}

Now let
$${\rm I}=-\int_{\R^d}\Pi_{\log}\big(h^V_t(\cdot,y)\big)(x)h^V_t(x,y)f(y)\,\mu_\kappa(\d y),$$
and
$${\rm II}=-u(t,x)\Pi_{\log}\big(u(t,\cdot)\big)(x).$$
Then
\begin{eqnarray*}
{\rm I}-{\rm II}&=&2\sum_{\alpha\in\mathcal{R}_+}\frac{\kappa_\alpha}{\langle \alpha,x\rangle^2}\int_{\R^d} \left[-\log\frac{h^V_t(\sigma_\alpha x,y)}{h^V_t(x,y)}+\frac{h^V_t(\sigma_\alpha x,y)}{h^V_t(x,y)}-1 \right]h^V_t(x,y)f(y)\,\mu_\kappa(\d y)\\
&&- 2\sum_{\alpha\in\mathcal{R}_+}\frac{\kappa_\alpha}{\langle \alpha,x\rangle^2}\int_{\R^d} \left[-\log\frac{u(t,\sigma_\alpha x)}{u(t,x)}+\frac{u(t,\sigma_\alpha x)}{u(t,x)}-1 \right]h^V_t(x,y)f(y)\,\mu_\kappa(\d y)\\
&=&2\sum_{\alpha\in\mathcal{R}_+}\frac{\kappa_\alpha}{\langle \alpha,x\rangle^2}\int_{\R^d} \left[\eta\Big(\frac{h^V_t(\sigma_\alpha x,y)}{h^V_t(x,y)}\Big) -  \eta\Big(\frac{u(t,\sigma_\alpha x)}{u(t,x)}\Big) \right]h^V_t(x,y)f(y)\,\mu_\kappa(\d y),
\end{eqnarray*}
where $\eta(t):=t-\log t-1$, $t\in\R$. It is clear that $(0,\infty)\ni t\mapsto \eta(t)$ is convex and $ \frac{\d}{\d t}\eta(t)=(t-1)/t$, which implies
$$ \eta(t)- \eta(s)\geq\frac{s-1}{s}(t-s), \quad s,t>0.$$
Hence
\begin{eqnarray*}
&&{\rm I}-{\rm II}\\
&\geq& 2\sum_{\alpha\in\mathcal{R}_+}\frac{\kappa_\alpha}{\langle \alpha,x\rangle^2}
\int_{\R^d} \frac{u(t,\sigma_\alpha x)-u(t,x)}{u(t,\sigma_\alpha x)}\left[\frac{h^V_t(\sigma_\alpha x,y)}{h^V_t(x,y)}-\frac{u(t,\sigma_\alpha x)}{u(t,x)}\right]h^V_t(x,y)f(y)\,\mu_\kappa(\d y)\\
&=&2\sum_{\alpha\in\mathcal{R}_+}\frac{\kappa_\alpha}{\langle \alpha,x\rangle^2}\bigg(
\int_{\R^d} \frac{u(t,\sigma_\alpha x)-u(t,x)}{u(t,\sigma_\alpha x)}h^V_t(r_\alpha x,y)f(y)\,\mu_\kappa(\d y)\\
&&-\int_{\R^d}\frac{u(t,\sigma_\alpha x)-u(t,x)}{u(t,x)}h^V_t(x,y)
f(y)\,\mu_\kappa(\d y)
\bigg)\\
&=&0,
\end{eqnarray*}
which means that
\begin{eqnarray}\label{proof-thm-2-3}
\int_{\R^d}\Pi_{\log}\big(h^V_t(\cdot,y)\big)(x)h^V_t(x,y)f(y)\,\mu_\kappa(\d y)\leq u(t,x)\Pi_{\log}\big( u(t,\cdot)\big)(x).
\end{eqnarray}

Combining \eqref{proof-thm-2-1}, \eqref{proof-thm-2-2} and \eqref{proof-thm-2-3} together, we have
\begin{eqnarray*}\label{proof-thm-2-4}
&&L_V u(t,\cdot)(x)+\beta(t,x)u(t,x)\\
&\geq& u(t,x)|\nabla\log u(t,\cdot)|^2(x)-u(t,x)\Pi_{\log}\big( u(t,\cdot)\big)(x)-V(t,x)u(t,x),
\end{eqnarray*}
which implies
$$-\Delta_\kappa\big(\log u(t,\cdot)\big)(x)\leq \beta(t,x)$$
by \eqref{main-thm-2-1}.
\end{proof}

The proof of the parabolic Harnack inequality by using the Li--Yau inequality is standard (see \cite[Section 2]{LiYau86}), which is  presented here for the sake of completeness.
\begin{proof}[Proof of Corollary \ref{harnack}]
By Theorem \ref{main-thm-2} and Corollary \ref{LY-dho}, it is clear that
\begin{equation}\begin{split}\label{pf-harnack-1}
\frac{|\nabla u(t,\cdot)|^2(x)}{u(t,x)^2}-\frac{\partial_t u(t,x)}{u(t,x)}&\leq (d+2\lambda_\kappa)\coth(2t)+|x|^2,\quad t>0,\,x\in\R^d.
\end{split}\end{equation}
Let $0<s<t<T$, $x,y\in\R^d$, and let
$$\gamma_\tau=\big(t+\tau(s-t),y+\tau(x-y)\big),\quad \tau\in[0,1],$$
be the straight line from $(t,y)$ to $(s,x)$.
Consider the function
$$\phi(\tau):=\log u(\gamma_{\tau}),\quad \tau\in[0,1].$$
 Then
\begin{eqnarray*}
&&\log \frac{u(s,x)}{u(t,y)}=\phi(1)-\phi(0)=\int_0^1 \phi'(\tau)\,\d\tau\\
&=&\int_0^1\left[\Big\langle\frac{\nabla_x u(\gamma_\tau)}{u(\gamma_\tau)}, x-y\Big\rangle-(t-s)\frac{\partial_t u(\gamma_\tau)}{u(\gamma_\tau)}\right]\,\d\tau\\
&\leq&\int_0^1|x-y|\frac{|\nabla_x u(\gamma_\tau)|}{u(\gamma_\tau)}\,\d\tau - (t-s)\int_0^1 \frac{|\nabla_x u(\gamma_\tau)|^2}{u(\gamma_\tau)^2}\,\d\tau\\
&& + (t-s)\int_0^1\left[(d+2\lambda_\kappa)\coth[2(t+\tau(s-t))]+|y+\tau(x-y)|^2\right]\,\d\tau\\
&\leq&|x-y|\Big(\int_0^1\frac{|\nabla_x u(\gamma_\tau)|^2}{u(\gamma_\tau)^2}\,\d\tau\Big)^{1/2}- (t-s)\int_0^1 \frac{|\nabla_x u(\gamma_\tau)|^2}{u(\gamma_\tau)^2}\,\d\tau \\
&&+(d+2\lambda_\kappa)\int_s^t\coth(2\tau)\,\d\tau+(t-s)\int_0^1|y+\tau(x-y)|^2\,\d \tau\\
&\leq& \frac{|x-y|^2}{4(t-s)}+(d+2\lambda_\kappa)\int_s^t\coth(2\tau)\,\d\tau+(t-s)\int_0^1|y+\tau(x-y)|^2\,\d \tau,
\end{eqnarray*}
where we applied \eqref{pf-harnack-1} in the first inequality, the Cauchy--Schwarz inequality in the second one and Young's inequality in the last one. Thus,
$$u(s,x)\leq u(t,y)\Big(\frac{\sinh(2t)}{\sinh(2s)}\Big)^{\frac{d+2\lambda_\kappa}{2}}
\exp\left(\frac{|x-y|^2}{4(t-s)}+(t-s)\frac{|x|^2+|y|^2+\langle x,y\rangle}{3}\right).$$

Moreover, by the elementary inequality, i.e.,
$$\frac{\sinh(t)}{\sinh(s)}=\frac{e^t-e^{-t}}{e^s-e^{-s}}\leq e^{t-s}\Big(\frac{t}{s}\Big),\quad t\geq s>0,$$
we have
$$u(s,x)\leq u(t,y)\Big(\frac{t}{s}\Big)^{\frac{d+2\lambda_\kappa}{2}}\exp\left(\frac{|x-y|^2}{4(t-s)}+(t-s)\Big[d+2\lambda_\kappa+\frac{|x|^2+|y|^2+\langle x,y\rangle}{3}\Big]\right).$$
\end{proof}

Finally, we turn to prove the remark.
\begin{proof}[Proof of Remark \ref{rmk}] We divide the proof into two parts.

(1) For any $t>0,\,x,y\in\R^d$ with $x=(x_1,\cdots,x_d)$ and $y=(y_1,\cdots,y_d)$, let $q_t(x,y)=p_t(x,y)/\varrho_t(x)$.  By \eqref{hc-kernel-2} and the fact (see e.g. \cite[page 2366]{ADH2019}) that
$$\int_1^1(1-s)^{\kappa_j-1}(1+s)^{\kappa_j}\,\d s=\frac{\sqrt{\pi}\Gamma(\kappa_j)}{\Gamma(\kappa_j+1/2)},\quad j=1,\cdots,d,$$
we have
$$q_t(x,y)=\exp\left(-\frac{|y|^2}{4t}\right)\prod_{j=1}^d \frac{\Gamma(\kappa_j+1/2)}{\sqrt{\pi}\Gamma(\kappa_j)}\int_{-1}^{1}g_j(s)\exp\left(\frac{x_jy_js}{\sinh(2t)}\right)\,\d s,$$
where $g_j(s):=(1-s)^{\kappa_j-1}(1+s)^{\kappa_j}$, $j=1,\cdots,d$. Then it is easy to see that
$$\partial_{x_i x_j}^2\log q_t(x,y)=0,\quad 1\leq i\neq j\leq d, $$
and for each $j=1,\cdots,d$,
\begin{eqnarray*}
&&\partial_{x_j x_j}^2\log q_t(x,y)\\
&=&\frac{y_j^2}{\sinh(2t)^2}\left[\frac{\int_{-1}^1 s^2g_j(s)\exp\left(\frac{x_jy_js}{\sinh(2t)}\right)\,\d s}{\int_{-1}^1 g_j(s)\exp\left(\frac{x_jy_js}{\sinh(2t)}\right)\,\d s}
-\frac{\left(\int_{-1}^1 s g_j(s)\exp\left(\frac{x_jy_js}{\sinh(2t)}\right)\,\d s\right)^2}{\left(\int_{-1}^1 g_j(s)\exp\left(\frac{x_jy_js}{\sinh(2t)}\right)\,\d s\right)^2}\right]\\
&\geq&0,
\end{eqnarray*}
by the Cauchy--Schwarz inequality. Hence $x\mapsto \log q_t(x,y)$ is convex. Thus, for every $\theta\in(0,1)$ and every $x_1,x_2\in\R^d$, by H\"{o}lder's inequality, we deduce that
\begin{eqnarray*}
&&\Big(\frac{u_t}{\varrho_t}\Big)\big(\theta x_1+(1-\theta)x_2\big)\\
&=&\int_{\R^d}q_t\big(\theta x_1+(1-\theta)x_2,y\big)u(0,y)\,\mu_\kappa(\d y)\\
&\leq&\int_{\R^d} q_t(x_1,y)^\theta q_t(x_2,y)^{1-\theta}u(0,y)\,\mu_\kappa(\d y)\\
&\leq&\left(\int_{\R^d} q_t(x_1,y)u(0,y)\,\mu_\kappa(\d y)\right)^\theta\left(\int_{\R^d} q_t(x_2,y)u(0,y)\,\mu_\kappa(\d y)\right)^{1-\theta}\\
&=&\left[\Big(\frac{u_t}{\varrho_t}\Big)(x_1)\right]^\theta\left[\Big(\frac{u_t}{\varrho_t}\Big)(x_2)\right]^{1-\theta},
\end{eqnarray*}
which implies that $\log(\frac{u_t}{\varrho_t})(\cdot)$ is convex.

(2) It follows from the similar proof of Corollary \ref{LY-dho} that \eqref{Rem-LY} implies \eqref{Rem-Har}.  We prove the converse part.
Let $s>0$ and $x\in \R^d$ be fixed. For any $z\in \R^d$, let
$$\gamma_{\varepsilon}:=(s+\varepsilon, x+\varepsilon z),\quad \varepsilon>0.$$
Applying \eqref{Rem-Har} with $t=s+\varepsilon$ and $y=x+\varepsilon z$, we have
\begin{equation}\begin{split}\label{rmk-1}
u(s,x)&\le u(s+\varepsilon,x+\varepsilon z)\exp\left\{ \frac{\varepsilon|z|^2}{4}+\varepsilon\int_0^1\beta\big(s+(1-\tau)\varepsilon, x+(1-\tau)\varepsilon z\big)\,\d\tau\right\}\\
&=:u(s+\varepsilon,x+\varepsilon z) e^{\Psi_{x,z}(\varepsilon)}.
\end{split}\end{equation}
As $\varepsilon\to 0$, it is easy to see that $\Psi_{x,z}(\varepsilon)$ can be approximated by
$$
\frac{\varepsilon|z|^2}{4}+\varepsilon\beta(s,x).$$
By \eqref{rmk-1}, a first-order Taylor expansion for the function $\varepsilon\mapsto u(s+\varepsilon,x+\varepsilon z) e^{\Psi_{x,z}(\varepsilon)}$ around $0$ leads to that
$$
0\le \langle\nabla_x  u(s,x), z\rangle  +\partial_su(s,x)+u(s,x)\left(\frac{|z|^2}{4}+\beta(s,x)\right).
$$
Taking $z=-2 \frac{\nabla_x u}{u}(s,x)$, we immediately obtain
$$
\frac{|\nabla_x u|^2}{u^2}(s,x)-\frac{\partial_s u}{u}(s,x)\le \beta(s,x),
$$
which is  \eqref{Rem-LY}.
\end{proof}

\subsection*{Acknowledgment}\hskip\parindent
The authors would like to express their sincere thanks to the anonymous referee for his/her  careful reading and  valuable suggestion.
The first named author would like to thank Dr. Niushan Gao for helpful discussions and acknowledge the Department of Mathematics and the Faculty of Science at Ryerson University for financial support and the financial support from the National Natural Science Foundation of China (Grant No. 11831014). The second named author would like to acknowledge the financial support from Qing Lan Project of Jiangsu.


\end{document}